\newtheorem{definition}{Definition}
\newtheorem{theorem}{Theorem}
\newtheorem{cor}{Corollary}
\newtheorem{remark}{Remark}
\newtheorem{lemma}{Lemma}
\newtheorem{question}{Question}
\newtheorem{corollary}{Corollary}
\newcommand{\acts}{\curvearrowright}
\newcommand{\N}{\mathcal{N}}
\newcommand{\St}{\text{St}}
\title{Universal minimal flows of extensions of and by compact groups}
\author[D. Barto\v{s}ov\'{a}]{Dana Barto\v{s}ov\'{a}}
\address{Department of Mathematics,
	University of Florida,
	PO Box 118105,
	Gainesville, FL 32611.}
\email{dbartosova@ufl.edu}
\subjclass[2010]{Primary 37B05, 22B99; Secondary 03E57}
\thanks{This research was partially supported by NSF grant DMS-1953955.}
\keywords{(Universal) minimal flow, greatest ambit, group extension, cross section, totally disconnected locally compact groups}
\begin{document}
	\maketitle
	
	\begin{abstract}
		Every topological group $G$ has up to isomorphism a unique minimal $G$-flow that maps onto every minimal $G$-flow, the universal minimal flow $M(G).$ We show that if $G$ has a 
		compact  normal subgroup $K$ that acts freely on $M(G)$ and 
		there exists a uniformly continuous cross section $G/K\to G,$ then the phase space of $M(G)$ is homeomorphic to the product of the phase space of $M(G/K)$ with $K$. Moreover, if either the left and right uniformities on $G$ coincide or $G\cong G/K\ltimes K$, we also recover the action, in the latter case extending a result of Kechris and Soki\'c. As an application, we show that the phase space of $M(G)$ for any totally disconnected locally compact Polish group $G$ with a normal open compact subgroup is homeomorphic to a finite set, Cantor set $2^{\mathbb{N}}$, $M(\mathbb{Z})$, or $M(\mathbb{Z})\times 2^{\mathbb{N}}.$
	\end{abstract}
	
	\section{Introduction}
	Structure theory of minimal flows dates to the origins of topological dynamics. Among the most notable achievements is the structure theorem on minimal metrizable distal flows of countable discrete groups by Furstenberg (\cite{F}). Furstenberg's result was extended to non-metrizable minimal distal flows by Ellis in \cite{E} and to locally compact groups by Zimmer in \cite{Z} and \cite{Z2}. %that has since been generalized and refined in various directions (e.g., \cite{BeF}). The most complicated minimal flow, the universal minimal flow, for any infinite discrete group is non-metrizable. Although Furstenberg's theorem extends to non-metrizable minimal distal flows, 
	However, little is known about the structure of general minimal flows. A well-known theorem by Ellis states that up to isomorphism, every topological group $G$ admits a unique universal minimal flow $M(G)$ (mapping onto every minimal flow preserving the respective actions). 
		Using Boolean theoretic methods, Balcar and Fran\v{e}k (\cite{BF}) showed that the underlying space (phase space) of $M(G)$ of any countably infinite discrete group $G$ is homeomorphic to the Gleason cover $G(2^{\kappa})$ of the Cantor cube $2^{\kappa}$ (unique compact extremely disconnected space that has an irreducible map onto $2^{\kappa}$, i.e., $2^{\kappa}$ is not the image of any proper clopen subset of $G(2^{\kappa})$) for some uncountable cardinal $\kappa\leq 2^{\aleph_0}$. It was proved that $\kappa=2^{\aleph_0}$ for Abelian groups by Turek  (\cite{T}) and in general, by a recent paper of Glasner, Tsankov, Weiss, and Zucker (\cite{GTWZ}). 
	
	\begin{theorem}[Balcar and Fran\v{e}k (\cite{BF}), Turek (\cite{T}), Glasner, Tsankov, Weiss, and Zucker (\cite{GTWZ})]\label{discrete}
		Let $G$ be a discrete countably infinite group. Then the phase space of $M(G)$ is homeomorphic to the Gleason cover of the Cantor cube $2^{2^{\aleph_0}}.$
	\end{theorem}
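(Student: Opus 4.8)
\section{Proof proposal}

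The plan is to organize the three cited results around the Boolean algebra $\mathbb{B}=\mathrm{RO}(M(G))$ of regular open subsets of the phase space. Since $G$ is discrete, its greatest ambit is the Stone--\v Cech compactification $\beta G$ (ultrafilters on $G$, with $G$ acting by left translation and the basepoint the principal ultrafilter at the identity), and $M(G)$ is realized as a minimal left ideal of the Ellis semigroup $(\beta G,\cdot)$. Fixing a minimal idempotent $u$ in that ideal, right multiplication $p\mapsto pu$ is a continuous retraction of $\beta G$ onto $M(G)=(\beta G)u$. Because $G$ is countable, $\beta G$ is the Stone space of the complete atomic algebra $\mathcal{P}(G)$, hence extremally disconnected; a continuous retract of an extremally disconnected compact space is again extremally disconnected, and a continuous image of the separable space $\beta G$ is separable. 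Thus $M(G)$ is a separable, extremally disconnected compact space, so $M(G)=\mathrm{St}(\mathbb{B})$ for the complete algebra $\mathbb{B}$ of its clopen $(=$ regular open$)$ sets, and it suffices to identify $\mathbb{B}$.

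The target algebra is easy to describe: the Gleason cover of $2^\kappa$ is exactly $\mathrm{St}$ of the completion of the free Boolean algebra $\mathrm{Fr}(\kappa)$ on $\kappa$ generators, since the dense inclusion $\mathrm{Fr}(\kappa)\hookrightarrow\mathbb{B}$ dualizes to the irreducible map $\mathrm{St}(\mathbb{B})\to\mathrm{St}(\mathrm{Fr}(\kappa))=2^\kappa$ characterizing the absolute. Consequently the whole theorem reduces to a single assertion: $\mathbb{B}$ contains a \emph{dense free subalgebra} (equivalently, an independent family $\{b_i:i<\kappa\}$ all of whose finite Boolean combinations are nonzero and form a $\pi$-base) of size $\kappa=2^{\aleph_0}$. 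Indeed, a dense subalgebra determines $\mathbb{B}$ as its completion, so producing such a family of size $2^{\aleph_0}$ yields $\mathbb{B}\cong\mathrm{RO}(2^{2^{\aleph_0}})$ and hence $M(G)\cong$ Gleason cover of $2^{2^{\aleph_0}}$.

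Following \cite{BF}, I would first produce a dense free subalgebra of \emph{some} uncountable size $\kappa\le 2^{\aleph_0}$. The upper bound is immediate from $|\mathbb{B}|\le|\mathcal{P}(G)|=2^{\aleph_0}$. For the lower bound one argues that $M(G)$ has no countable $\pi$-base: a minimal flow of an infinite discrete group cannot have $\pi$-weight $\aleph_0$, which one sees by a combinatorial analysis of how partitions of $G$ (clopen sets of $\beta G$) must persist in the minimal subflow. The mechanism for manufacturing independent elements of $\mathbb{B}$ is to start from maps $c\colon G\to 2$, read off the induced clopen sets of $\beta G$, push them into $M(G)$ along the retraction, and verify independence by exhibiting, for each finite Boolean pattern, a point of the minimal flow realizing it.

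The main obstacle is upgrading $\kappa$ from ``uncountable'' to exactly $2^{\aleph_0}$, i.e.\ constructing a dense independent family of full continuum size. For abelian $G$ this is Turek's theorem \cite{T}, obtained through explicit algebraically independent families of colorings. In general it is the contribution of \cite{GTWZ}: their Bernoulli-disjointness machinery guarantees that the shift $G\acts 2^G$ has enough free, generic behaviour that $2^{\aleph_0}$ many colorings remain mutually independent after passing to the minimal flow, forcing $\pi w(M(G))\ge 2^{\aleph_0}$. Combined with the upper bound this pins $\kappa=2^{\aleph_0}$, and the reduction above completes the proof. I expect essentially all the difficulty to sit in this last independence construction; the extremal-disconnectedness reduction and the Boolean characterization of the Gleason cover are formal once they are set up.
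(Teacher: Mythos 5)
The paper does not actually prove this theorem: it is quoted from the literature, and the three citations \emph{are} the proof (Balcar--Fran\v{e}k establish that the phase space of $M(G)$ is the Gleason cover of $2^{\kappa}$ for some uncountable $\kappa\le 2^{\aleph_0}$, and Turek, then Glasner--Tsankov--Weiss--Zucker, pin down $\kappa=2^{\aleph_0}$). So there is no in-paper argument to compare yours against step by step. That said, your outline is a faithful reconstruction of how the cited proofs are organized: realizing $M(G)$ as the retract $(\beta G)u$ of $\beta G$ under right multiplication by a minimal idempotent, deducing extremal disconnectedness and the bound $|\mathrm{Clop}(M(G))|\le 2^{\aleph_0}$, translating ``Gleason cover of $2^{\kappa}$'' into ``complete Boolean algebra with a dense free subalgebra on $\kappa$ generators,'' and splitting the remaining work between \cite{BF} ($\kappa$ uncountable) and \cite{T}, \cite{GTWZ} ($\kappa=2^{\aleph_0}$). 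This division of labor matches the paper's own account in the surrounding text.

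Two caveats. First, your justification ``a minimal flow of an infinite discrete group cannot have $\pi$-weight $\aleph_0$'' is false as literally stated: an irrational rotation of the circle is a metrizable minimal $\mathbb{Z}$-flow. What is true, and what you need, is that a minimal \emph{subflow of $\beta G$} (equivalently, the universal minimal flow) has uncountable $\pi$-weight; that is precisely the nontrivial combinatorial core of \cite{BF}, not a routine observation, so the phrasing obscures where the difficulty lives. Second, and relatedly, the two steps carrying essentially all of the mathematical content --- producing a \emph{dense} independent family in $\mathrm{Clop}(M(G))$ (density is much stronger than mere freeness, and not every complete Boolean algebra admits a dense free subalgebra), and the Bernoulli-disjointness argument forcing the family to have size $2^{\aleph_0}$ --- are deferred entirely to the references. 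What you have is a correct reduction and road map, which is an appropriate gloss on a cited background theorem, but not a self-contained proof.
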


Up-to-date, we lack a concrete description of the universal minimal action  even in the case of the discrete group of integers $\mathbb{Z}$. 
	
	%Using Boolean theoretic methods, Balcar and Fran\v{e}k (\cite{BF}) showed that the underlying space of $M(G)$ of any infinite discrete group $G$ is homeomorphic to Gleason space of $2^{\kappa},$ for some uncountable cardinal $\kappa$. The cardinal $\kappa$ turns out to be $2^{\omega}$ by a work of Turek (\cite{T}) for Abelian groups and in general by a recent paper of Glasner, Tsankov, Weiss, and Zucker (\cite{GTWZ}). However, we still lack a concrete description of the universal minimal action  even in the case of the discrete group of integers $\mathbb{Z}$.  
	
	On the other hand, there are topological groups for which the universal minimal flow (and therefore any minimal flow) is metrizable (e.g., the group of permutations of a countable set $S_{\infty}$ (\cite{GW}), the group of homeomorphisms of the Lelek fan (\cite{BK}) ), or even trivializes (e.g., the group of automorphisms of the rational linear order (\cite{P}), group of unitaries of the separable Hilbert space (\cite{GM})). These results are obtained via structural Ramsey theory (\cite{P},\cite{KPT}, \cite{GW}) or concentration of measure phenomena (\cite{GM}, \cite{P2}), and provide a concrete description. If the group $G$ can be represented as a group of automorphisms of a countable first-order structure, then the phase space of $M(G)$ is either finite or homeomorphic to the Cantor set $2^{\mathbb{N}}.$ However, we lack group topological characterization of such groups. 
	
	Generalizing Theorem \ref{discrete}, Bandlow  showed  in \cite{Ba} that if  an  $\omega$-bounded group $G$ (every maximal system of pairwise disjoint translates of a neighbourhood in $G$ is countable) has an infinite minimal flow, then the phase space of $M(G)$  has the same Gleason cover as $2^{\kappa}$ for some infinite cardinal $\kappa$ (for a  simplified proof see \cite{T2}). B\l aszczyk, Kucharski, and Turek  demonstrated in \cite{BKT} that every infinite minimal flow of such a group maps irreducibly onto $2^{\kappa}$ for some infinite $\kappa$.
	
	Our original motivation was to prove an analogue of Theorem \ref{discrete} for Polish (separable and completely metrizable) locally compact groups with a basis at the neutral element of open subgroups. They are a natural direction from discrete groups since their topology is determined by cosets of basic open (compact) subgroups and their universal minimal flow can be viewed from the Boolean algebraic perspective. This class of groups coincides with the class of locally compact groups of automorphisms of countable first order structures, which in turn by van Dantzig's Theorem (\cite{vD}) coincides with the class of Polish totally disconnected locally compact (t.d.l.c.) groups. We succeed in case $G$ possesses a compact open normal subgroup.

	\begin{theorem}
		Let $G$ be a Polish t.d.l.c. group with a compact normal open subgroup $K.$ Then $M(G)$ is homeomorphic to a finite set, $M(\mathbb{Z}),$ $2^{\mathbb{N}},$ or $M(\mathbb{Z})\times 2^{\mathbb{N}}$.
	\end{theorem}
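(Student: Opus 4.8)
The plan is to split into cases according to the two invariants attached to the extension $1\to K\to G\to Q\to 1$, where $Q:=G/K$. Since $K$ is open and $G$ is separable, $G$ is covered by countably many cosets of $K$, so $Q$ is a countable discrete group; and since $K$ is a compact, metrizable, totally disconnected group, it is either finite or homeomorphic to the Cantor set $2^{\mathbb{N}}$. I will also use repeatedly that for a compact group $H$ the greatest ambit is $H$ itself, so that $M(H)=H$ with the left translation action.

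\emph{Case $K$ finite.} Here $K$ is a finite open neighbourhood of $e$, so $\{e\}$ is open and $G$ is discrete; being Polish it is countable. If $G$ is finite then $M(G)=G$ is a finite set. If $G$ is infinite then it is a countably infinite discrete group, and Theorem \ref{discrete} gives that the phase space of $M(G)$ is homeomorphic to the Gleason cover of $2^{2^{\aleph_0}}$, that is, to $M(\mathbb{Z})$. This yields the first two options.

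\emph{Case $K$ infinite, so $K\cong 2^{\mathbb{N}}$.} If $Q$ is finite then $G$ is compact (a finite union of translates of the compact set $K$), so $M(G)=G\cong 2^{\mathbb{N}}$. If $Q$ is infinite, I would apply the main theorem of this paper: its hypotheses are a uniformly continuous cross section $Q\to G$ and a free action of $K$ on $M(G)$. The cross section is obtained by picking one representative in each coset; as $Q$ is discrete this map is automatically uniformly continuous. Granting the freeness of the $K$-action (discussed below), the theorem gives that the phase space of $M(G)$ is homeomorphic to $M(Q)\times K$. Since $Q$ is a countably infinite discrete group, Theorem \ref{discrete} identifies $M(Q)$ with $M(\mathbb{Z})$ as a space, so $M(G)\cong M(\mathbb{Z})\times 2^{\mathbb{N}}$. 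Together with the previous case this exhausts the four stated possibilities, and the four cases are genuinely distinct ($G$ finite; $G$ infinite compact; $G$ non-compact with $K$ finite; $G$ non-compact with $K$ infinite).

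\emph{Main obstacle.} The one substantial point is the freeness of the $K$-action on $M(G)$ in the remaining case, where both $K$ and $Q$ are infinite. Realizing $M(G)$ as a minimal subset of the greatest ambit $S(G)$ (the Samuel compactification of $G$ for its right uniformity), it suffices to show that no $k\in K\setminus\{e\}$ fixes a point of $M(G)$. Openness of $K$ is what makes this tractable: every clopen subgroup $U\le K$ gives a uniformly continuous finite colouring of $G$ by the finitely many cosets of $U$ in $K$, and such colourings extend continuously to $S(G)$, providing the functions that detect $k$. In the discrete case this is precisely the statement that $G$ acts freely on $\beta G$ --- equivalently, that no nontrivial group element admits an invariant ultrafilter --- which follows from a fixed-point-free colouring argument. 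The extra difficulty introduced by normality is that, writing $kg=g\,(g^{-1}kg)$, the conjugate $g^{-1}kg$ ranges over the entire $K$-conjugacy class of $k$, so a single colouring need not separate $kg$ from $g$ uniformly in $g$; controlling these conjugates by means of the compactness of $K$ and the uniform continuity of the colourings, and thereby excluding a $k$-fixed point in the minimal set, is the step I expect to demand the most care.
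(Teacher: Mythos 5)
Your case analysis is exactly the route the paper takes to get from Theorem \ref{tdlc} (the phase space of $M(G)$ is $M(G/K)\times K$) to the four possibilities: $G/K$ is countable discrete because $K$ is open and $G$ is separable, $K$ is either finite or a Cantor set, and Theorem \ref{discrete} identifies the phase space of $M(G/K)$ with that of $M(\mathbb{Z})$ when $G/K$ is infinite. The only place where you stop short of a proof is the step you flag as the ``main obstacle,'' namely the freeness of the $K$-action on $M(G)$, and there you are working much harder than necessary: this does not require a new colouring argument, it is Veech's theorem \cite{V}, which the paper cites for precisely this purpose. Every locally compact group acts freely on its greatest ambit $S(G)$; since $K$ is a subgroup of the locally compact group $G$ and $M(G)$ is (isomorphic to) a subflow of $S(G)$, the restricted action of $K$ on $M(G)$ is free. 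In particular, the conjugation difficulty you describe (controlling $g^{-1}kg$ as it ranges over the conjugacy class of $k$) disappears once freeness is asserted for the action of all of $G$ rather than attacked for $K$ alone. With that citation inserted your argument is complete and agrees with the paper's; your treatment of the case $K$ finite, where you observe that $G$ is then countable discrete and apply Theorem \ref{discrete} directly instead of factoring through $M(G/K)\times K$, is a mildly more economical variant of the same argument.
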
 

	If $G$ admits a basis of compact open normal subgroups then we also recover the action. Among Polish groups, these are the t.d.l.c. groups admitting a two sided invariant metric (TSI), in particular, Abelian groups.

	These are consequences of the main results of the present paper, that can be summarized as follows. 
	
	\begin{theorem}\label{combined}
		Let $G$ be a topological group with a compact normal subgroup $K$ such that $K$ acts freely on $M(G)$ and the quotient mapping $G\to G/K$ admits a uniformly continuous cross section. Then the phase space of $M(G)$ is homeomorphic to the product of the phase space of $M(G/K)$ and $K.$ If the left and right uniformities on $G$ coincide or the cross section is a group homomorphism, then the homeomorphism is an isomorphism of flows. 
	\end{theorem}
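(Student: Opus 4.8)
The plan is to separate the purely topological claim (the homeomorphism) from the dynamical one (the flow isomorphism), and to realize everything inside the greatest ambit $S(G)$, the Samuel compactification of $G$ with respect to its right uniformity, on which $G$ acts by left translation and which carries a compact right-topological semigroup structure with $M(G)$ realized as a minimal left ideal.

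First I would pin down the base of the bundle by showing $M(G)/K\cong M(G/K)$ as $G/K$-flows. The quotient homomorphism $\pi\colon G\to G/K$ is right-uniformly continuous and extends to a continuous semigroup homomorphism $\Pi\colon S(G)\to S(G/K)$ that is equivariant for the $G$-actions (where $G$ acts on $S(G/K)$ through $\pi$). Since $K$ is normal, $G$ permutes $K$-orbits, so the $G$-action on $M(G)$ descends to a minimal $G/K$-action on $M(G)/K$; by universality of $M(G/K)$ there is a surjection $r\colon M(G/K)\to M(G)/K$. In the other direction, $\Pi$ restricts to a $G$-map $M(G)\to S(G/K)$ whose image $L=\Pi(M(G))$ is a minimal left ideal, hence a copy of $M(G/K)$; as $K$ acts trivially on $S(G/K)$ this factors through a surjection $p\colon M(G)/K\to L\cong M(G/K)$. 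Now $p\circ r$ is an endomorphism of the universal minimal flow $M(G/K)$, which is coalescent, so it is an automorphism; this forces both $p$ and $r$ to be isomorphisms. In particular the fibres of $\Pi|_{M(G)}\colon M(G)\to L$ are exactly the $K$-orbits. Note that this step uses neither freeness nor the cross section.

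Next I would trivialize the bundle. Because $K$ is compact and acts freely, $q\colon M(G)\to M(G)/K$ is a principal $K$-bundle, and it suffices to produce a continuous section. Here the uniformly continuous cross section $s\colon G/K\to G$ enters: it extends to a continuous map $\tilde s\colon S(G/K)\to S(G)$ with $\Pi\circ\tilde s=\mathrm{id}$ (as $\pi\circ s=\mathrm{id}$). Restricting to $L$ and composing with right multiplication $\rho_u\colon x\mapsto xu$ by a minimal idempotent $u\in M(G)$ (a continuous, $G$-equivariant retraction of $S(G)$ onto the minimal left ideal $M(G)$), I set $\sigma=\rho_u\circ\tilde s|_L\colon L\to M(G)$. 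Using that $\Pi$ is a homomorphism and that the idempotent $v=\Pi(u)$ is a right identity on the minimal left ideal $L$, one checks $\Pi\circ\sigma=\mathrm{id}_L$, so $\sigma$ is the desired continuous section of $\Pi|_{M(G)}$, equivalently (via $p$) of $q$. Freeness and compactness then make the map $(y,k)\mapsto k\cdot\sigma(y)$ a continuous bijection from the compact space $M(G/K)\times K$ onto $M(G)$, hence a homeomorphism, which proves the first assertion.

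Finally, transporting the $G$-action through this homeomorphism and writing $gk=(gkg^{-1})g$ gives the skew-product form $g\cdot(y,k)=(\pi(g)\cdot y,\,(gkg^{-1})\,c(g,y))$, where the cocycle $c\colon G\times M(G/K)\to K$ is determined by $g\cdot\sigma(y)=c(g,y)\cdot\sigma(\pi(g)\cdot y)$ and is continuous since $K$ is compact and acts freely. Thus the homeomorphism is automatically an isomorphism onto this twisted product, and the content of the second assertion is that under the extra hypotheses the twisting straightens to a recognizable flow, i.e.\ the cocycle $c$ is a coboundary and $\sigma$ can be replaced by a $G$-equivariant section. When the cross section is a homomorphism, $s$ is already equivariant for the complement $s(G/K)$ while $K$ is absorbed by the free fibre action, and the two combine to the semidirect skew product, recovering and extending the Kechris--Soki\'c description; when the left and right uniformities coincide the conjugation action is uniformly inner-trivial, allowing one to symmetrize the section by averaging over the compact group $K$ and thereby kill $c$. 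I expect this last step --- producing a continuous equivariant section, equivalently trivializing the $K$-valued cocycle over the minimal base $M(G/K)$ --- to be the main obstacle, since equivariant triviality is strictly stronger than the topological triviality already obtained and is exactly where the hypotheses are indispensable.
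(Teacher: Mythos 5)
Your first half (the homeomorphism) is correct but takes a genuinely different route from the paper. The paper stays with the concrete description of $S(G)$ by envelopes of ultrafilters: Lemma \ref{orbits} identifies $S(G)/K$ with $S(G/K)$ via the universal property of the greatest ambit, Theorem \ref{section} extends the cross section to $\bar s\colon S(G/K)\to S(G)$ and checks by an explicit filter computation that $\bar s$ picks a point of each $K$-orbit, and Corollary \ref{product} then gives the homeomorphism $S(G)\cong S(G/K)\times K$, of which the statement about $M(G)$ is the restriction. You instead work entirely inside $M(G)$ using the Ellis semigroup: $\Pi\circ\tilde s=\mathrm{id}$ by density of $G/K$ (cleaner than the paper's filter computation), retraction into the minimal left ideal by right multiplication with a minimal idempotent, and coalescence of $M(G/K)$ to identify the fibres of $\Pi|_{M(G)}$ with the $K$-orbits. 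This buys you something real: you only ever need $K$ to act freely on $M(G)$, exactly as in the statement, whereas the paper's Corollary \ref{product} is phrased with freeness on all of $S(G)$. The price is reliance on standard but here-unproved facts (that $\Pi$ is a homomorphism of right topological semigroups, that idempotents of a minimal left ideal are right identities for it, and coalescence of the universal minimal flow).

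The second half has a genuine gap, and the target you set yourself there is unattainable. You correctly note that transporting the action through $(y,k)\mapsto k\sigma(y)$ always yields a continuous skew product $g(y,k)=(\pi(g)y,(gkg^{-1})c(g,y))$; this makes ``the homeomorphism is an isomorphism of flows'' tautological unless one insists on a specific form of the action on the product, which is exactly what the paper's Theorem \ref{flows} does: it produces the left-translation skew product $g(Ku,k)=(Kgu,\rho(g,Ku)k)$ with $\rho$ a continuous $K$-valued cocycle. But you then assert that the content of the extra hypotheses is that ``$c$ is a coboundary and $\sigma$ can be replaced by a $G$-equivariant section,'' to be achieved by ``averaging over $K$.'' This is false, not merely unproved: take $G=K$ compact and nontrivial, which is SIN, admits the homomorphic cross section $\{*\}\mapsto e$, and acts freely on $M(K)=K$; a $G$-equivariant section $M(G/K)\to M(G)$ would be a $G$-fixed point of $M(G)$, contradicting freeness. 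Nor is there any sense in which one can average a section valued in $M(G)$. What the paper actually does in the SIN case is different: SIN forces left and right $K$-orbits in $S(G)$ to coincide, which together with freeness makes the right-handed equation $s'(Kgu)\rho(g,Ku)=gs'(Ku)$ well posed with $\rho$ valued in $K$; continuity of $\rho$ is then established by a direct topological argument, and the cocycle is never trivialized. In the split case the paper uses the homomorphic section directly via $g(u,k)=(\pi(g)u,\,gk\,s(\pi(g))^{-1})$. So the second assertion of the theorem remains unproved in your proposal, and the route you sketch for it cannot succeed as stated.
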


As a corollary, we obtain a result by Kechris and Soki\'c.

\begin{corollary}[\cite{KS}]
	Let $G$ be a Polish group with a metrizable $M(G)$ and let $K$ be a compact metrizable group. Suppose that $G$ acts continuously on $K$ by automorphisms. Then $M(G\ltimes K)$ is isomorphic to $M(G)\times K.$
\end{corollary}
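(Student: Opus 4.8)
The plan is to apply Theorem~\ref{combined} to the group $H:=G\ltimes K$ together with its compact normal subgroup $K\cong\{e\}\times K$. The projection $(g,k)\mapsto g$ is a continuous open homomorphism with kernel $K$, so $H/K\cong G$ as topological groups and hence $M(H/K)\cong M(G)$. The map $s\colon G\to H$, $g\mapsto(g,e)$, is a continuous group homomorphism splitting the quotient $H\to H/K$; being a continuous homomorphism it is uniformly continuous, so the cross-section hypothesis of Theorem~\ref{combined} is met. Since $s$ is moreover a homomorphism, the theorem will upgrade the resulting homeomorphism to an isomorphism of flows, provided the remaining hypothesis is verified. (Metrizability of $M(G)$ is not actually needed below; we retain it only to match the original formulation.)

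The substantive point --- and the main obstacle --- is to show that $K$ acts freely on $M(H)$, about whose internal structure we know nothing a priori. I would avoid analyzing $M(H)$ directly and instead exhibit \emph{some} minimal $H$-flow carrying a free $K$-action, then transport freeness to $M(H)$ by universality. Writing $\alpha_g$ for the automorphism of $K$ through which $g\in G$ acts, set $X:=M(G)\times K$ with
\[
(g,k)\cdot(y,h)=\bigl(g\cdot y,\; k\,\alpha_g(h)\bigr).
\]
A direct check of the action axioms shows this is a continuous action $H\acts X$; it is simply the product of the pullback of the $G$-flow $M(G)$ with the transitive $H$-flow $K$ on which $(g,k)$ acts by $h\mapsto k\,\alpha_g(h)$. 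The subgroup $K$ acts on $X$ by left translation in the second coordinate, hence freely.

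For minimality of $X$, fix $(y,h)$: for each $g$ the map $k\mapsto k\,\alpha_g(h)$ is a bijection of $K$, so the orbit of $(y,h)$ contains $\{g\cdot y\}\times K$ for every $g$ and therefore equals $(G\cdot y)\times K$, which is dense since $M(G)$ is minimal. Now universality of $M(H)$ supplies an $H$-map $\pi\colon M(H)\to X$; if $k\cdot x=x$ for some $x\in M(H)$ and $k\in K$, then $k\cdot\pi(x)=\pi(k\cdot x)=\pi(x)$, forcing $k=e$ by freeness on $X$. Thus $K$ acts freely on $M(H)$, all hypotheses of Theorem~\ref{combined} are in place, and the cross section $s$ is a homomorphism, so the theorem yields $M(G\ltimes K)\cong M(H/K)\times K\cong M(G)\times K$ as flows. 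The routine parts are the action axioms and the identification $H/K\cong G$; the one genuinely useful idea is the auxiliary flow $X$, which makes freeness an abstract consequence of universality rather than something to be read off from $M(H)$.
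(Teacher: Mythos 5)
Your proof is correct, but it reaches the corollary by a different route than the paper does. The paper derives this statement from its semidirect-product theorem in Section 5.2 (the ``cross section is a homomorphism'' case of Theorem~\ref{combined}): there one directly defines the twisted ambit action $g(u,k)=(\pi(g)u,\,gks(\pi(g))^{-1})$ on $S(G)\times K$, checks that $(u,k)\mapsto ks'(u)$ is an ambit homomorphism into $S(G\ltimes K)$, and concludes by universality of the greatest ambit that it is an isomorphism; the minimal-flow statement then follows by restricting to a minimal subflow. Crucially, that argument never uses freeness of the $K$-action, so the paper gets the result (and the stronger statement $S(G\ltimes K)\cong S(G)\times K$) without the hypothesis you work hardest to verify. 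You instead treat Theorem~\ref{combined} as a black box, which, as literally stated, does list freeness of $K$ on the universal minimal flow among its hypotheses; your auxiliary minimal flow $X=M(G)\times K$ with $(g,k)\cdot(y,h)=(gy,\,k\alpha_g(h))$ --- on which $K$ visibly acts freely, so that freeness pulls back to $M(G\ltimes K)$ along the universal map --- is a clean and correct way to close that gap, and it is a genuinely useful observation if one insists on quoting Theorem~\ref{combined} verbatim. (Amusingly, your $X$ is exactly the flow the paper proves to be $M(G\ltimes K)$; unwinding $gks(\pi(g))^{-1}$ in coordinates gives precisely $k_0\alpha_{g_0}(k)$.) The trade-off: your route is shorter to state given the summary theorem but does extra work that the split case does not require and does not by itself recover the greatest-ambit version; the paper's route is self-contained at the level of $S(G)$ and needs no freeness. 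Your remark that metrizability of $M(G)$ is not needed agrees with the paper's own comment that the Kechris--Soki\'c result extends to arbitrary topological groups.
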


In Section \ref{by}, we verify that an analogue of Theorem \ref{combined} holds when $K$ is closed (not necessarily compact), but $G/K$ is compact, extending another result of Kechris and Soki\'c from \cite{KS}.

The strategy is to prove statements in Theorem \ref{combined} for the greatest ambit of $G$ that contains $M(G)$ as its minimal subflow.

	%This result is motivated by the lack of the structure theory for locally compact groups (DISTAL DO GENERALIZE) together with rich structure theory of locally compact groups. 
	
	%We extends a result by Kechris and Soki\'c....
	
	Let us remark that recently Basso and Zucker isolated  in  \cite{BZ} a class of topological groups for which $M(G\times H)\cong M(G)\times M(H),$ containing all groups with metrizable universal minimal flows. On the contrary, for any two infinite discrete group $G,H$, $M(G\times H)\not\cong M(G)\times M(H),$ in fact, their phase spaces are not even homeomorphic. 
	
	%If not the same uniformity at least homeomoprhism. 
	
	%This work was originally inspired by 
	%If $K$ not normal, only homeomorphism of Samuel compactifications. 

	\section{Background}
	Let $G$ be a topological group with neutral element $e$. The topology on $G$ is fully determined by a basis at $e$ of open neirghbourhoods, that will be denoted by $\N_e(G).$ Withouth loss of generality, we can assume that $V=V^{-1}$ for every $V\in\N_e(G).$ 
	
	A \emph{$G$-flow} is a continuous left action $\alpha:G\times X\to X$ of $G$ on a compact Haursdorff space $X$, to which we refer to as the \emph{phase space} of $\alpha$. We typically write $gx$ in place of $\alpha(g,x).$ The action is \emph{free} if $G$ acts without fixed points on $X$, i.e.,  for every $g\neq e$ and $x\in X,$ $gx\neq x.$ A \emph{homomorphism} between  two $G$-flows $G\times X\to X$ and $G\times Y\to Y$ is a continuous map $\phi:X\to Y$ such that $\phi(gx)=g\phi(x).$  An \emph{isomorphism} is a bijective homomorphism (recall that a bijective continuous map between compact spaces is a homeomorphism). We will use $\cong$ for the isomorphism relation on flows. An \emph{ambit} is a $G$-flow $X$ with a \emph{base point} $x_0\in X$ that has a dense orbit, i.e., $Gx_0=\{gx_0:g\in G\}$ is dense in $X$.
	An \emph{ambit homomorphism} is a homomorphism between ambits $(X,x_0)$ and $(Y,y_0)$ sending $x_0$ to $y_0.$ There is, up to isomorphism, a unique ambit $(S(G),e)$  that homomorphically maps onto every ambit, the \emph{greatest ambit}. Topologically, $S(G)$ is the \emph{Samuel compactification} of $G$ with respect to the \emph{right uniformity} on $G$ generated by open covers \[
	\{\{Vg:g\in G\}:V\in \N_e(G)\}.
	\]
	A compact space $X$ admit a unique compatible uniformity (generated by finite open covers). We will say that a map $G\to X$ is \emph{right uniformly continuous } if it is uniformly continuous with respect to the right uniformity on $G$ and the unique compatible uniformity on $X.$

	 The Samuel compactification is the smallest compactification of $G$ (i.e., there is an embedding of $G$ into $S(G)$ with dense image) such that every right uniformly continuous function from $G$ to a compact space (uniquely) extends to $S(G).$ Similarly, we define left uniformity, right action, and the greatest ambit with respect to the right action. We call a topological group \emph{SIN} if the left and right uniformities coincide. It is easy to see that a group $G$ is SIN if and only if it admits a basis of $e$ consisting of conjugation invariant neighbouroods, that is, $V$ such that $gVg^{-1}=V$ for every $g\in G.$ Multiplication and inversion in SIN groups are (right, left) uniformly continuous. 
	 
	 We will describe Samuel's original construction (see \cite{S}) of the Samuel compactification for a topological group $G$ with respect to the right uniform structure. For a discrete group $H,$ the Samuel compactification coincides with the \v{C}ech-Stone compactification $\beta H.$ $\beta H$ consists of ultrafilters on $H$ with a basis for topology of clopen sets $\hat{A}=\{u\in \beta(H), A\in u\}$ for $A\subset H$. We can identify $H$ with a dense subset of $\beta H$ via principal ultrafilters. The action $H\times\beta H\to \beta H$ given by $hu=\{hA:A\in u\}$ is the greatest ambit $(\beta H,e).$ Given a topological group $G$, we denote by $G_d$ the same algebraic group with the discrete topology. Then the greatest ambit action of $G$ remains an ambit action $G_d\times (S(G),e)\to (S(G),e).$ Since $\beta G_d$ is the greatest ambit for $G_d,$ there is an ambit homomorphism $\pi:\beta G_d\to S(G).$ For an ultrafilter $u$ in $\beta G_d,$ we let the \emph{envelope} of $u$ be its subfilter
	 \[\label{envelope}
	 u^*=\left<
	 \{VA:V\in\N_e(G),A\in u\} \tag{$\Delta$}
	 \right>
	  \]
	  generated by $\{VA:V\in\N_e(G),A\in u\}.$ Given $u,v\in\beta G_d,$ we set $u\sim v$ iff $u^*=v^*$. Then $\sim$ is an equivalence relation whose equivalence classes are exactly preimages of $\pi.$ The collection of 
	  \[
	  \bar{A}=\{u^*\in S(G):u^*\supset\{VA:V\in\N_e(G)\}\}
	  \]
	  for $A\subset G$ forms a basis for closed sets in $S(G).$
	 
	 A $G$-flow on $X$ is \emph{minimal} if $X$ contains no closed proper non-empty invariant subset, i.e., no proper \emph{subflow}. Equivalently, for every $x\in X,$ the orbit $Gx$ is dense in $X$. Up to isomorphism, there is a unique minimal flow that admits a homomorphism onto every minimal flow, called \emph{the universal minimal flow} of $G$ and denoted by $M(G)$. The universal minimal flow is isomorphic to any minimal subflow of $S(G)$. If $G$ is compact, then $M(G)\cong G$ with the left translation action, if $G$ is locally compact, non-compact, then $G$ acts freely on $M(G)$, by a theorem of Veech (\cite{V}), and it is nonmetrizable (\cite{KPT}). 
	
	\section{Extensions of compact groups}

	Given topological groups $G,K,H,$ we say that $G$ is an \emph{extension} of $K$ by $H$ if there is a short exact sequence
	\[
	\{e\}\to K\to G\to H\to \{e\},
	\]
	 where each arrow is a continuous open group homomorphism onto its image. We will focus on the case that $K$ is compact. Identifying $K$ with its image, we can assume that $K$ is a compact normal subgroup of $G$ and $H$ is isomorphic to $G/K.$ For the rest of this section, we fix a topological group $G$ together with a compact normal subgroup $K.$ We will investigate the relationship between $S(G)$ and $S(G/K)\times K,$ respectively, $M(G)$ and $M(G/K)\times K.$
	 
	 Let $S(G)/K$ denote the orbit space $\{Kx:x\in S(G)\}$ of the restricted  action $K\times S(G)\to S(G)$. Since $K$ is compact, $K$-orbits are equivalence classes of a closed equivalence relation on  $S(G),$ and therefore $S(G)/K$ (with the quotient topology) is a compact Hausdorff space. We have that $G/K\times S(G)/K\to S(G)/K$ defined by $(Kg)Kx=Kgx$ is a continuous ambit action with the base point $K.$
	
	\begin{lemma}\label{orbits}
		$S(G/K)\cong S(G)/K.$
		\end{lemma}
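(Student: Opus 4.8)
The plan is to realize $S(G)/K$ as the greatest $G/K$-ambit by producing mutually inverse ambit homomorphisms between it and $S(G/K)$. The excerpt already records that $S(G)/K$ is a compact Hausdorff $G/K$-ambit with base point $K$, so by universality of the greatest ambit there is a canonical $G/K$-ambit homomorphism $\psi:S(G/K)\to S(G)/K$. The bulk of the work is to construct a homomorphism in the opposite direction and to verify that the two compose to identities.

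First I would check that the quotient homomorphism $q:G\to G/K$ is right uniformly continuous: given a basic cover $\{W(Kg):Kg\in G/K\}$ of $G/K$ with $W\in\N_e(G/K)$, put $V=q^{-1}(W)\in\N_e(G)$; since $q$ is surjective, $q(Vg)=W(Kg)$, so the cover $\{Vg:g\in G\}$ of $G$ maps into a refinement of the given cover. Composing $q$ with the dense embedding $G/K\hookrightarrow S(G/K)$ and invoking the universal property of the Samuel compactification, the map extends to a continuous $\hat q:S(G)\to S(G/K)$. By density of $G$ in $S(G)$ and continuity, $\hat q$ is the unique $G$-ambit homomorphism into $S(G/K)$ (where $G$ acts via $q$), so $\hat q(gx)=(Kg)\hat q(x)$ for all $g\in G$ and $x\in S(G)$. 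Taking $g=k\in K$ gives $\hat q(kx)=\hat q(x)$, so $\hat q$ is constant on $K$-orbits and descends to a continuous map $\bar q:S(G)/K\to S(G/K)$. One checks immediately that $\bar q((Kg)Kx)=\hat q(gx)=(Kg)\bar q(Kx)$ and that $\bar q$ sends the base point $K$ to the base point of $S(G/K)$, so $\bar q$ is a $G/K$-ambit homomorphism.

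It then remains to see that $\psi$ and $\bar q$ are mutually inverse. Both compositions $\bar q\circ\psi$ and $\psi\circ\bar q$ are $G/K$-ambit endomorphisms of $S(G/K)$ and $S(G)/K$ respectively, each fixing its base point. An ambit endomorphism is equivariant and fixes the base point, hence is the identity on the (dense) orbit of the base point and therefore the identity everywhere by continuity; thus $\bar q\circ\psi=\mathrm{id}$ and $\psi\circ\bar q=\mathrm{id}$, and $\psi$ is the desired isomorphism $S(G/K)\cong S(G)/K$. I expect the only genuinely technical point to be the verification of right uniform continuity of $q$ together with the well-definedness of the descent $\bar q$; the rigidity of ambit endomorphisms that closes the argument is routine once both sides are known to be ambits.
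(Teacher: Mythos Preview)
Your argument is correct and follows essentially the same route as the paper. The only differences are cosmetic: the paper obtains the map $S(G)\to S(G/K)$ by viewing $S(G/K)$ as a $G$-ambit and invoking the greatest-$G$-ambit property of $S(G)$, whereas you produce it by extending the right uniformly continuous quotient map $q$ via the Samuel compactification; and the paper, rather than explicitly building the inverse $\psi$ and checking both compositions, simply observes that the induced map $S(G)/K\to S(G/K)$ is a $G/K$-ambit homomorphism into the \emph{greatest} $G/K$-ambit and hence an isomorphism (which amounts to the same rigidity-of-ambit-endomorphisms fact you spell out).
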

	
	\begin{proof}
		We can define an ambit action $G\times S(G/K)\to S(G/K)$ by $gx=Kgx$. Since $S(G)$ is the greatest $G$-ambit, there is an ambit homomorphism $\phi:S(G)\to S(G/K)$. Clearly, $\phi$ is constant on every $K$-orbit, so $\phi$ factors through $\psi:S(G)\to S(G)/K$ and $\xi:S(G)/K\to S(G/K).$ Since $\xi$  maps $Ke\in S(G)/K$ to $K\in G/K\subset S(G/K)$, it is a $G/K$-ambit homomorphism.  
		 But $S(G/K)$ is the greatest $G/K$-ambit, therefore $\xi$ must be an ambit isomorphism.
	\end{proof}

\begin{remark}\label{nearult}
By Lemma \ref{orbits} and (\ref{envelope}), we have that for any $u\in\beta G_d,$ the $K$-orbit $Ku^*=\{ku^*:k\in K\}$ is a closed subset of $S(G)$ corresponding to the filter  \[\left<\{VKA:V\in\mathcal{N}_e(G),A\in u\}\right>.\] That is, elements of $Ku^*$ are exactly those $v^*$ that extend it. Observe, that this is not the case when $K$ is not compact, not even if we replace the orbits of $K$ with their closures. 
 \end{remark}

\begin{lemma}
	$M(G/K)\cong M(G)/K.$
\end{lemma}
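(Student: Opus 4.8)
The plan is to realize $M(G)$ concretely inside the greatest ambit $S(G)$ and push it through the quotient by $K$, using Lemma \ref{orbits} to identify the target. Fix a minimal subflow $M\subseteq S(G)$; as recalled in the Background section, $M\cong M(G)$, and since $K\le G$ the set $M$ is $K$-invariant. Hence the restriction $\psi|_M$ of the quotient map $\psi\colon S(G)\to S(G)/K$ carries $M$ onto the orbit space $M/K=\{Kx:x\in M\}$. Because $K$ is compact and $S(G)/K$ is Hausdorff, $\psi|_M$ is a continuous closed surjection whose fiber over $\psi(x)$ is exactly $Kx\cap M=Kx\subseteq M$, so it induces a homeomorphism $M/K\to\psi(M)$ onto the closed subspace $\psi(M)\subseteq S(G)/K$. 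Normality of $K$ makes $(Kg)(Kx)=K(gx)$ a well-defined $G/K$-action turning $M/K$ into a $G/K$-flow, and $\psi|_M$ is equivariant in the sense $\psi(gx)=(Kg)\psi(x)$.

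Next I would show that $\psi(M)$ is a \emph{minimal} $G/K$-subflow of $S(G)/K$. It is nonempty, compact, and $G/K$-invariant because $M$ is $G$-invariant and $\psi$ intertwines the actions. For minimality, suppose $N\subseteq\psi(M)$ is a nonempty closed $G/K$-invariant set; then $(\psi|_M)^{-1}(N)$ is a nonempty closed subset of $M$, and it is $G$-invariant since for such an $x$ and any $g\in G$ one has $\psi(gx)=(Kg)\psi(x)\in N$. Minimality of $M$ forces $(\psi|_M)^{-1}(N)=M$, whence $N=\psi(M)$. This is the standard fact that an equivariant image of a minimal flow is minimal, and it is the one step that deserves care here, precisely because the acting group changes from $G$ to $G/K$; everything else is bookkeeping.

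Finally, the isomorphism $\xi\colon S(G)/K\to S(G/K)$ of Lemma \ref{orbits} carries the minimal $G/K$-flow $\psi(M)$ to a minimal subflow of the greatest $G/K$-ambit $S(G/K)$, which is therefore isomorphic to $M(G/K)$. Chaining the identifications yields $M(G)/K\cong M/K\cong\psi(M)\cong M(G/K)$, as claimed. I would close by noting that $M(G)/K$ is well defined up to isomorphism independently of the chosen minimal subflow $M$: any two minimal subflows of $S(G)$ are $G$-isomorphic, hence $K$-isomorphic, so their orbit spaces agree up to $G/K$-flow isomorphism.
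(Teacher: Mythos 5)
Your proposal is correct and follows essentially the same route as the paper: realize $M(G)$ as a minimal subflow of $S(G)$, observe that its $K$-orbit space is a minimal $G/K$-subflow of $S(G)/K\cong S(G/K)$ (via Lemma \ref{orbits}), and conclude because minimal subflows of the greatest ambit are isomorphic to the universal minimal flow. The paper states these steps more tersely; you have merely spelled out the verification that the equivariant image of a minimal flow remains minimal when the acting group passes to the quotient.
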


\begin{proof}
	Since $M(G)$ is  isomorphic to a minimal $G$-subflow of $S(G),$ $M(G)/K$ is isomorphic to a minimal $G/K$-subflow of $S(G)/K.$ By Lemma \ref{orbits}, $S(G)/K$ is isomorphic to the greatest ambit $S(G/K)$ and therefore its minimal subflows are isomorphic to $M(G/K).$ 
	
	Let us remark that we can prove this fact directly as in the proof of Lemma \ref{orbits} from universality and uniqueness of $M(G).$
	\end{proof}

\begin{definition}
	Let $X$ be a topological space and let $\pi:X\to X/E$ be a quotient map. A cross-section is a  map $\phi:X/E \to X$ such that $\pi\circ\phi=\text{id}_{X/E}.$
\end{definition}

\begin{theorem}\label{section}
	Suppose that there exists a uniformly continuous cross section $s:G/K\to G.$ Then there is a continuous cross section $s':S(G)/K\to S(G)$. %such that $\psi\circ s'=\text{id}.$ 

\end{theorem}

\begin{proof}
	\begin{comment}
	Let $S$ be the image of $s$. We claim that for every near ultrafilter $u,$ $[u]_K\cup\{S\}$ generates a near ultrafilter. Since $S$ intersects every coset of $K,$ we have that $KA\cap S\neq\emptyset$ for every $\emptyset\neq A\subset G,$ so $[u]_K\cup\{S\}$ generates a near filter. Suppose that it can be extended by $B_1, B_2\subset G.$ It means that for every $A\in u$ and every $V\in\mathcal{N},$ we have that $B\cap V(KA\cap S)\neq \emptyset,$ but there exists $W$ such that $WB_1,\cap WB_2=\emptyset.$ We have that $KB_i\in [u]_K$, so $WKB_1\cap WKB_2\neq \emptyset.$ This is equal to $KWB_1\cap KWB_2\in [u]_K$. Now we have that $KWB_1\cap KWB_2\cap S\neq \emptyset.$ But $S$ chooses from every coset of $K$ exactly one point, which means that $WB_1\cap WB_2\neq \emptyset.$
	\end{comment}
	Let $\pi:S(G)\to S(G/K)$ be the natural $G$-ambit homomorphism as in Lemma \ref{orbits}.
	Let $s:G/K\to G$ be a uniformly continuous cross section. Since  uniformly continuous maps from $G/K$ to compact spaces uniquely extend to $S(G/K),$ there is a continuous map $\bar{s}:S(G/K)\to S(G)$ extending $s$. Concretely (see \cite{B}), denoting by $S$ the image of $s,$ for any $u\in\beta G_d,$ \[\bar{s}(\pi(u^*))=\left<\{VKA:V\in\mathcal{N}_e(G), A\in u\}\cup\{S\}\right>^*.\]
	Since $S$ intersects every coset of $K$ and $\{KA:A\in u\}$ is an ultrafilter in $\beta(G/K)_d,$ we get that $v=\left<\{KA:A\in u\}\cup \{S\}\right>$ is an ultrafilter in $\beta G_d$ such that $\{KA:A\in u\}=\{KB:B\in v\}.$
	 By Remark \ref{nearult}, it follows that $\bar{s}\pi(u^*)$ lies in the orbit $Ku^*$. Precomposing with $\xi$ from Lemma \ref{orbits}, we obtain a continuous cross section $s':S(G)/K\to S(G).$
\end{proof}

\begin{remark}\label{continuous}
	Continuity of $s'$ means that for every open $U\subset S(G)$ there is an open $V\subset S(G)$ such that $s'(KV)\subset U$ from the definition of the quotient topology on $S(G)/K.$ 
\end{remark}

%\begin{question}
%	FOR MYSELF ONLY. DELETE. 
%	Suppose that $K$ is just closed normal.
%	If $M(G)$ breaks into minimal $K$-subflows, is there a cross-section $M(G)/M(K)\to M(G)$?
%	\end{question}

\begin{corollary}\label{product}
	Suppose that $K$ acts freely on $S(G).$
	If there exists a uniformly continuous cross section $s:G/K\to G,$ then the phase space of  $S(G)$ is homeomorphic to  $S(G/K)\times K.$
\end{corollary}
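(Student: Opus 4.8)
The plan is to build the homeomorphism in the direction that is manifestly continuous and then invoke compactness to upgrade it. Combining Lemma~\ref{orbits} with Theorem~\ref{section}, I first produce a genuinely continuous section $\sigma:S(G/K)\to S(G)$ of the ambit homomorphism $\phi:S(G)\to S(G/K)$. Setting $\sigma=s'\circ\xi^{-1}$, where $\xi:S(G)/K\to S(G/K)$ is the isomorphism of Lemma~\ref{orbits} and $s'$ is the continuous cross section of Theorem~\ref{section}, the identities $\psi\circ s'=\mathrm{id}_{S(G)/K}$ and $\phi=\xi\circ\psi$ (from the proof of Lemma~\ref{orbits}) give $\phi\circ\sigma=\mathrm{id}_{S(G/K)}$. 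Thus $\sigma$ selects exactly one point in each fibre of $\phi$.

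Next I define $\Psi:S(G/K)\times K\to S(G)$ by $\Psi(w,k)=k\,\sigma(w)$. This is continuous, being the composite of $\sigma$ (continuous), the action $K\times S(G)\to S(G)$ (continuous by definition of a flow), and coordinate projections. The source $S(G/K)\times K$ is compact (a product of compact spaces) and the target $S(G)$ is compact Hausdorff, so once $\Psi$ is shown to be a bijection it is automatically a homeomorphism. This is the payoff of working in this direction rather than writing down the inverse map by hand, since the candidate inverse $x\mapsto(\phi(x),k)$ would require a separate and more delicate continuity argument for the $K$-coordinate.

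It remains to show $\Psi$ is bijective, which is where both hypotheses enter. Because $K$ is compact, each $K$-orbit in $S(G)$ is closed and coincides with a fibre of the quotient map $\psi:S(G)\to S(G)/K$; since $\xi$ is a bijection, these fibres are precisely the fibres of $\phi$. For injectivity, suppose $k_1\sigma(w_1)=k_2\sigma(w_2)$; applying $\phi$ and using that $\phi$ is constant on $K$-orbits gives $w_1=w_2=:w$, and then $k_1\sigma(w)=k_2\sigma(w)$ forces $k_1=k_2$ by freeness of the $K$-action. For surjectivity, given $x\in S(G)$ put $w=\phi(x)$; then $x$ and $\sigma(w)$ lie in the same fibre of $\phi$, hence in the same $K$-orbit, so $x=k\,\sigma(w)=\Psi(w,k)$ for some $k\in K$.

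The only genuine obstacle is the bookkeeping that identifies the fibres of $\phi$ with single $K$-orbits; this is exactly where compactness of $K$ (closedness of orbits and their equality with the $\psi$-fibres) and freeness of the action (uniqueness of the representative $k$) are both indispensable, and dropping either hypothesis breaks the argument, as Remark~\ref{nearult} already signals for the compactness assumption. Everything else reduces to the standard compact-to-Hausdorff upgrade of a continuous bijection.
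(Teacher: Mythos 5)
Your proposal is correct and follows essentially the same route as the paper: both define the map $(w,k)\mapsto k\cdot(\text{section of }w)$ using the cross section from Theorem~\ref{section}, deduce bijectivity from freeness of the $K$-action together with the identification of $\phi$-fibres with $K$-orbits (Lemma~\ref{orbits}), and upgrade the continuous bijection to a homeomorphism via compactness. The only cosmetic difference is that you precompose with $\xi^{-1}$ to work on $S(G/K)\times K$ directly, whereas the paper works on $S(G)/K\times K$ and invokes Lemma~\ref{orbits} at the end.
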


\begin{proof}
	%We define an action $G\times (S(G)/K\times K)\to S(G)/K\times KzAS$, $g([u]_K,k)=([gu]_k,k_g k),$ where $k_g$ is the unique element in $K$ such that $g=k_g s(Kg).$ 
	Denote by $s':S(G)/K\to S(G)$ the continuous cross section as in Theorem \ref{section}.
	Define a map $\phi:S(G)/K\times K\to S(G)$ by $\phi(Ku,k)=ks'(Ku).$ As $K$ acts freely, $\phi$ is a bijection. Since $\phi$ is multiplication of two continuous maps, it is continuous, hence a homeomorphism.  The statement follows by Lemma \ref{orbits}.
\end{proof}

\begin{comment}
\begin{remark} 
	WE NEED TO USE NEAR ULTRAFILTERS
	When $K$ is not normal, one can reason in Lemma \ref{orbits} by uniqueness of the Samuel compactification that $S(G)/K$ is homemomorphic to $S(G/K)$. Theorem \ref{section} and Corollary \ref{product} do not depend on $G/K$ being a group and give us that we still have $S(G)$ is homeomorphic to $S(G/K)\times K.$ 
\end{remark}
\end{comment}

\begin{corollary}\label{main}
	If there exists a uniformly continuous cross section $s:G/K\to G,$ then the phase space of $M(G)$ is homeomorphic to $M(G/K)\times K.$
\end{corollary}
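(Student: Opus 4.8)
The plan is to realize $M(G)$ inside $S(G)$ and to cut the homeomorphism of Corollary \ref{product} down to it. I would first fix an embedding of $M(G)$ as a minimal $G$-subflow of $S(G)$, as recalled in the background. Because $K$ is normal, $M(G)$ is $K$-invariant, so the restricted $K$-action makes sense on $M(G)$; writing $\psi\colon S(G)\to S(G)/K$ for the quotient, the image $\psi(M(G))$ is exactly the orbit space $M(G)/K$, which the lemma identifying $M(G/K)$ with $M(G)/K$ lets me rewrite as $M(G/K)$. The point to record here is that $\psi(M(G))$ consists precisely of those $K$-orbits contained in $M(G)$: any orbit meeting the $K$-invariant set $M(G)$ lies entirely inside it.

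Next I would invoke Corollary \ref{product}, keeping its standing hypothesis that $K$ acts freely on $S(G)$, to obtain the homeomorphism $\phi\colon S(G)/K\times K\to S(G)$, $\phi(Ku,k)=ks'(Ku)$, where $s'$ is the continuous section of Theorem \ref{section}. The key step is to compute $\phi^{-1}(M(G))$ and show it equals $\big(M(G)/K\big)\times K$. For the inclusion into $M(G)$: if $Ku\in M(G)/K$ then $Ku\subseteq M(G)$, and since $s'(Ku)\in Ku$ we get $s'(Ku)\in M(G)$, whence $ks'(Ku)\in M(G)$ by $K$-invariance. Conversely, any $x\in M(G)$ equals $\phi(Ku,k)$ for a unique pair $(Ku,k)$, and then $x=ks'(Ku)$ lies in the orbit $Ku$; thus $Ku$ meets $M(G)$, so $Ku\subseteq M(G)$, i.e.\ $Ku\in M(G)/K$.

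Finally, the set $\big(M(G)/K\big)\times K$ is closed (hence compact), being the product of the compact set $\psi(M(G))$ with $K$, so the restriction of $\phi$ to it is a continuous bijection onto $M(G)$ between compact Hausdorff spaces, and therefore a homeomorphism. Composing with the identification $M(G)/K\cong M(G/K)$ yields the desired homeomorphism $M(G)\cong M(G/K)\times K$.

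I expect the only delicate point to be the orbit bookkeeping of the middle paragraph: everything hinges on the fact that a single $K$-orbit either lies inside $M(G)$ or is disjoint from it (normality of $K$ together with $G$-invariance of $M(G)$) and on the fact that the section value $s'(Ku)$ never leaves its own orbit. Granting this, the product decomposition of $M(G)$ is inherited from that of $S(G)$ for free, since the global homeomorphism $\phi$ of Corollary \ref{product} automatically restricts to a homeomorphism between the subspace $\phi^{-1}(M(G))$ and $M(G)$, and we have identified that subspace as $\big(M(G)/K\big)\times K$.
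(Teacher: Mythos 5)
Your proposal is correct and is essentially the argument the paper intends: Corollary \ref{main} is stated as an immediate consequence of Corollary \ref{product} together with the identification $M(G)/K\cong M(G/K)$, obtained by restricting the homeomorphism $\phi(Ku,k)=ks'(Ku)$ to the $K$-invariant minimal subflow $M(G)\subset S(G)$, exactly as you do. Your orbit bookkeeping (that $s'(Ku)\in Ku$ and that a $K$-orbit meeting $M(G)$ lies in it) and your explicit importing of the freeness hypothesis from Corollary \ref{product} are the right points to check, and they all go through.
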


If $G$ is SIN, we can fully describe the greatest ambit and the universal minimal actions.

\begin{theorem}\label{flows}
	Let $G$ be SIN. Suppose that $K$ acts freely on $S(G)$ and that there is a uniformly continuous cross section $s:G/K\to G.$ Then there is an action of $G$ on $S(G/K)\times K$ such that $S(G)\cong S(G/K)\times K$ as ambits.

\end{theorem}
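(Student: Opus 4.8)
The homeomorphism is already in hand from Corollary \ref{product}: writing $\bar{s}:S(G/K)\to S(G)$ for the extension of $s$ built in Theorem \ref{section}, transported along the isomorphism $\xi$ of Lemma \ref{orbits}, and normalizing $s(K)=e$ (so that $\bar{s}(K)=e^*$ is the base point of $S(G)$), the map $\phi:S(G/K)\times K\to S(G)$ given by $\phi(x,k)=k\bar{s}(x)$ is a homeomorphism. The whole task is therefore to equip $S(G/K)\times K$ with a $G$-action making $\phi$ an ambit isomorphism, and to describe this action intrinsically. The plan is to produce a continuous cocycle $\bar\rho:G\times S(G/K)\to K$ and set
\[
g\cdot(x,k)=\bigl(gx,\,(gkg^{-1})\,\bar\rho(g,x)\bigr),
\]
where $gx$ is the $G$-action on $S(G/K)$ from Lemma \ref{orbits} and $gkg^{-1}\in K$ uses normality of $K$.

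The cocycle is first defined on the dense copy of $G$. For $g\in G$ and a coset $Kh\in G/K$, both $s(Kgh)$ and $gs(Kh)$ lie in $Kgh$, so
\[
\tilde\rho(g,Kh)=s(Kgh)^{-1}\,g\,s(Kh)\in K
\]
is well defined, and a direct computation gives the cocycle relation $\tilde\rho(gg',Kh)=\tilde\rho(g,g'Kh)\,\tilde\rho(g',Kh)$. Here is the one place the hypothesis that $G$ is SIN is essential: for SIN groups multiplication and inversion are right uniformly continuous, and left translations are always right uniformly continuous, so together with the uniform continuity of $s$ the map $Kh\mapsto\tilde\rho(g,Kh)$ is right uniformly continuous for each fixed $g$ and extends to a continuous $\bar\rho(g,\cdot):S(G/K)\to K$. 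I would then secure \emph{joint} continuity of $\bar\rho$ on $G\times S(G/K)$ by identifying $\bar\rho(g,x)$ with the unique $k\in K$ satisfying $g\bar{s}(x)=k\,\bar{s}(gx)$; this $k$ exists and is unique because $K$ acts freely and $\bar{s}(gx)$ and $g\bar{s}(x)$ lie in the same $K$-orbit (both project to $gx$ under the map $\pi$ of Lemma \ref{orbits}), and the assignment is continuous because freeness and compactness of $K$ make $(k,y)\mapsto(ky,y)$ a homeomorphism of $K\times S(G)$ onto a closed subset of $S(G)\times S(G)$ with continuous inverse.

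With $\bar\rho$ in hand the verification is short. Equivariance of $\phi$ is the computation
\[
g\,\phi(x,k)=g\bigl(k\bar{s}(x)\bigr)=(gkg^{-1})\,g\bar{s}(x)=(gkg^{-1})\,\bar\rho(g,x)\,\bar{s}(gx)=\phi\bigl(g\cdot(x,k)\bigr),
\]
using normality of $K$ and the defining relation for $\bar\rho$. Since $\phi$ is a bijection conjugating the displayed formula to the genuine $G$-action on $S(G)$, the formula is automatically a continuous $G$-action (continuity of each coordinate having been arranged above), and $\phi$ is a $G$-flow isomorphism. Finally $\phi(K,e)=\bar{s}(K)=e^*$ is the base point of $S(G)$, so $\phi$ preserves base points and is an ambit isomorphism, which is the assertion.

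The main obstacle is the middle step, namely upgrading the algebraic cocycle to a continuous map $\bar\rho$ on all of $S(G/K)$: its closed-form definition only lives on the dense part $G/K$, and it is exactly the SIN hypothesis that makes it right uniformly continuous and hence extendable, while the free compact $K$-action supplies joint continuity and pins down the conjugation term. Everything else — the cocycle identity, the equivariance of $\phi$, and the base-point check — is routine once continuity is established.
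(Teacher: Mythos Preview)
Your route is different from the paper's and, once a left/right slip is repaired, actually somewhat cleaner. The paper works with the \emph{right} action of $K$ on $S(G)$: it uses SIN to show that left and right $K$-orbits in $S(G)$ coincide, defines $\rho$ by $s'(Kgu)\rho(g,Ku)=gs'(Ku)$ and $\phi(Ku,k)=s'(Ku)k$, and then proves continuity of $\rho$ by a bare-hands open-set chase. You instead keep the left $K$-action already available from Corollary~\ref{product} and absorb the discrepancy via conjugation in the second coordinate.

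There is, however, an internal inconsistency you should fix. Your explicit formula $\tilde\rho(g,Kh)=s(Kgh)^{-1}gs(Kh)$ is the \emph{right} cocycle (and it does satisfy the untwisted identity you state), whereas your later characterization ``the unique $k$ with $g\bar s(x)=k\,\bar s(gx)$'' and your equivariance computation both use the \emph{left} cocycle $gs(Kh)\,s(Kgh)^{-1}$. These disagree on $G/K$, so the extension of one cannot be identified with the other. The left version satisfies the \emph{twisted} identity $\bar\rho(gg',x)=\bigl(g\,\bar\rho(g',x)\,g^{-1}\bigr)\bar\rho(g,g'x)$, and it is precisely this twisted identity, not the one you wrote, that makes $g\cdot(x,k)=(gx,(gkg^{-1})\bar\rho(g,x))$ an action. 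Once you commit to the left version throughout, the detour through uniform continuity of multiplication and inversion in SIN groups becomes superfluous: your own free-action argument (that $(k,y)\mapsto(ky,y)$ is a homeomorphism of $K\times S(G)$ onto a closed subset of $S(G)\times S(G)$) already defines $\bar\rho$ directly on all of $G\times S(G/K)$ with joint continuity, with no need to first extend from $G/K$. A pleasant side effect is that, so organized, your argument never invokes the SIN hypothesis at all.
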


\begin{proof}
	
	%As $G$ is SIN, the left and right uniformity on $G$ are equal. It follows that the greatest ambits for the left and right actions are supported on the same phase space. Analogously, $G/K$ is SIN and the greatest ambits with respect to both uniformities share the phase space. By Lemma \ref{orbits},  the orbit spaces $S(G)/K$ and $K\backslash S(G)$ are the same. It means that for every $k\in K, u\in S(G)$ there is $k'\in K$ such that $ku=uk'.$ %By Lemma \ref{orbits}  $S(G/K)$ and $S(K\backslash G)$ are naturally isomorphic (extending the identity map $G/K\to G\backslash K, Kg\mapsto gK$). 
	
	Let $\N_e(G)$ be a neighbourhood basis of $G$ at $e$ consisting of $V$ such that $gVg^{-1}=V$ for every $g\in G.$ Then for every $A\subset G,$ $VA=AV,$ therefore the construction of $S(G)$ by Samuel via envelopes yields the same space. Similarly, since $K$ is normal in $G,$ $VKA=AKV,$ we have that the quotient maps $S(G)\to S(G/K)$ and $S(G)\to S(K\backslash G)$ define the same equivalence relation on $S(G).$ By Lemma \ref{orbits},  the orbit spaces $S(G)/K=\{Ku:u\in S(G)\}$ and $K\backslash S(G)=\{uK:u\in S(G)\}$ are the same. It means that for every $k\in K, u\in S(G)$ there is $k'\in K$ such that $ku=uk'.$ %By Lemma \ref{orbits}  $S(G/K)$ and $S(K\backslash G)$ are naturally isomorphic (extending the identity map $G/K\to G\backslash K, Kg\mapsto gK$). 
	
	Suppose that there is a uniformly continuous cross section $s:G/K$ to $K;$ by shifting we can assume that $s(K)=e.$ By Theorem \ref{section},  $s$ uniquely extends to a cross section $s':S(G)/K\to S(G)$.  %Precomposing with $\xi$ we get a cross section  $S(G)/K\to S(G).$
	
		We define $\rho: G\times S(G)/K\to K$ to ``correct'' that $s$ may not be a group homomorphism by the rule
	\[
	s'(Kgu)\rho(g, Ku)=gs'(Ku).
	\]
	Since left and right $K$-orbits in $S(G)$ coincide and $K$ acts freely on $S(G),$ $\rho$ is well defined. 
	
	We will prove that $\rho$ is continuous. Fix $g\in G$ and $Ku\in S(G)/K$, and denote $k=\rho(g, Ku).$ Let $k\in O$ be a basic open neighbourhood of $k$ in $K$. %for some $O\in \N_e(G).$ 
	By Corollary \ref{product} for right actions, we have that $ S(G)/K\times G\to S(G)$ defined by $(Ku,l)\mapsto s'(Ku)l$ is a homeomorphism. Consequently, $s'(S(G)/K)\times O$ is an open neighbourhood of $gs'(Ku)$ in $S(G).$ Since the action of $G$ on $S(G)$ is continuous, there are  open neighbourhoods $V\in \mathcal{N}_e(G)$ and $U$ of $s'(u)$ such that $VgU\subset s'(S(G)/K)\times O.$ Since $s'$ is continuous, there is an  open neighbourhood $U'$ of $u$ such that $s'(KU')\subset U.$
	Since  $U'$ is open in $S(G)$ and $G$ acts by homeomorphisms, $VgU'=\bigcup_{h\in Vg}hU',$ is open in $S(G).$ Therefore, $VgU'k^{-1}$ is an open neighbourhood of $s'(gKu).$ By continuity of $s'$ and of the action of $G$ on $G/K$, there are neighbourhoods $V'\in\mathcal{N}_e(G)$ and $U''$ of $u$ in $S(G)$ such that $s'(V'gKU'')\subset VgU'k^{-1}.$ Finally, we get that $s'((V\cap V')gK(U'\cap U''))\times O\supset (V\cap V')gs'(K(U'\cap U'')),$ so $\rho((V\cap V')g,K(U\cap U''))\subset O.$
	
	%Since the action of $G$ on $S(G/K)$  and $s'$ are continuous, we have that $G\times G/K\to G (g,Ku)\mapsto s'(Kgu)$ is continuous. Let $Ok\cap K$ be an open neighbourhood of  $k=\rho(g,Ku)$ for some $O\in\N_e(G).$          
	
	The function $\rho$ is a cocycle in a sense that for every $g,h\in G$ we have 
	\begin{align}\label{cocycle}
	\rho(gh,Ku)=\rho(g,Khu)\rho(h,Ku): \tag{\textasteriskcentered}
	\end{align}
	\begin{align*}
	%\rho(g,Khu)\rho(h,Ku)=&(s'(g,Kghu))^{-1}gs'(Khu)(s'(Khu))^{-1}hs'(Ku)\\
	%=&(s'(g,Kghu))^{-1}gh s'(Ku)\\
	%=&\rho(gh,Ku).\\
		s'(Kghu)\rho(gh, Ku)=&ghs'(Ku)\\
		s'(Kghu)\rho(g,Khu)=&gs'(Khu)\\
		s'(Khu)\rho(h,Ku)=&hs'(Ku)\\
		 s'(Kghu)\rho(gh, Ku)=&gs'(Khu)\rho(h,Ku)=s'(Kghu)\rho(g,Khu)\rho(h,Ku),
	\end{align*}
	and (\ref{cocycle}) follows from freeness of the action by $K.$ 
	%Since $s'$, $G\acts S(G/K)$, inversion, and multiplication are continuous, $\rho$ is continuous. 
	
	We define an action $\alpha:G\times (S(G/K)\times K)\to S(G/K)\times K$ by 
	\[
	g(Ku,k)=(Kgu,\rho(g,Ku)k).
	\] 
	
	By (\ref{cocycle}), $\alpha$ is an action and it is obviously continuous. 
	
	We define \[\phi:S(G/K)\times K\to S(G), (Ku, k)\mapsto s'(Ku)k.\] Then $\phi$ is an ambit homomorphism: 
	\begin{enumerate}
		\item $\phi$ is continuous as it is a composition of multiplication with continuous functions. 
		\item $\phi(K,e)=s'(K)e=ee=e$.  
		\item  $\phi(g(Ku,k))=\phi((Kgu,\rho(g,Ku)k))=s'(Kgu)\rho(g,Ku)k=gs'(Ku)k=g(\phi(Ku,k))$, where the second to last equality holds  by the definition of $\rho.$
		\end{enumerate}
	
	By universality of $S(G),$ we can conclude that $\phi$ is an isomorphism.
	\begin{comment}
	For any two distinct $g,h$ such that $gK=hK$ and any $u\in S(G/K)$ $\rho(g, Ku)\neq \rho(h, Ku),$ $\pi$ is an ambit action. Therefore
	\[S(G)\to S(G/K)\times K, e\mapsto (K,e).
	\]
	defines an ambit homomorphism. 
	We have that $g\mapsto (Kg,\rho(g,K)e)=(Kg,\rho(g,K))$ is one-to-one. We need to show that it and its inverse are uniformly continuous SINCE multiplication in SIN groups is uniformly continuous!!!!!
	
	$\rho(g,K)=s(Kg)^{-1}g s(K)=s(Kg)^{-1}g$
	
	So the inverse is $(Kg,\rho(g,K))\mapsto s(Kg)\rho(K,g)=g.$
	
	SHOW THAT SENDING IDENTITY to IDENTITY is an ISOMORPHISM. 
	\end{comment}

	%We will prove that if either $K$ or $G/K$ is compact, then $S(G)\cong S(G/K)\times S(K).$
	%Suppose that $K$ is compact and it acts freely on $S(G)$. Moreover, assume that there is a uniformly continuous cross section $s:G/K\to G.$ Then $S(G)\cong S(G/K)\times K.$

	%Define $S(G)/K\times K\to S(G)$ by
	%$(u,k)\mapsto$
\end{proof}

\begin{corollary}\label{iso}
		Let $G$ be SIN. Then there is an action of $G$ on $M(G/K)\times K$ such that $M(G)\cong M(G/K)\times K$.
\end{corollary}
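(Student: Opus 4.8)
The plan is to deduce the corollary from Theorem \ref{flows} by restricting the ambit isomorphism to minimal subflows. By Theorem \ref{flows} we have an isomorphism of $G$-ambits $\phi:S(G/K)\times K\to S(G)$, where $S(G/K)\times K$ carries the action $\alpha$ given by $g(Ku,k)=(Kgu,\rho(g,Ku)k)$. Since $M(G)$ is isomorphic to a minimal subflow of $S(G)$, its preimage $M=\phi^{-1}(M(G))$ is a minimal subflow of $(S(G/K)\times K,\alpha)$. I will show that $M$ has the form $N\times K$ for a minimal subflow $N$ of $S(G/K)$; since $N$ is then a minimal subflow of the greatest ambit $S(G/K)$, it is isomorphic to $M(G/K)$, and transporting the restricted action along such an isomorphism will give an action of $G$ on $M(G/K)\times K$ with $M(G)\cong M(G/K)\times K$.

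First, the projection $p:S(G/K)\times K\to S(G/K)$, $(Ku,k)\mapsto Ku$, is a homomorphism of $G$-flows, where $G$ acts on $S(G/K)$ through $G\to G/K$, because $p(g(Ku,k))=Kgu=g\cdot p(Ku,k)$. Hence $N:=p(M)$ is a minimal $G/K$-subflow of $S(G/K)$, so $N\cong M(G/K)$, and $M\subseteq N\times K$ is immediate.

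The crux is the reverse inclusion $N\times K\subseteq M$, which amounts to showing that every nonempty fiber $M\cap(\{Ku\}\times K)$ equals all of $K$. Restricting $\alpha$ to $K\subseteq G$ and using that $Kk_0u=Ku$ for $k_0\in K$, the subaction of $K$ fixes the first coordinate and acts on the fiber by left translation, $k_0(Ku,k)=(Ku,\rho(k_0,Ku)k)$; thus the fiber is invariant under left multiplication by $\rho(K,Ku)$. I claim $k_0\mapsto\rho(k_0,Ku)$ is a bijection of $K$. By the cocycle identity (\ref{cocycle}) with $g,h\in K$ and $Kk_1u=Ku$ it is a continuous group homomorphism; it is injective because $K$ acts freely on $S(G)$ (if $\rho(k_0,Ku)=e$ then $s'(Ku)=k_0s'(Ku)$, forcing $k_0=e$); and it is surjective because, as established in the proof of Theorem \ref{flows}, the left and right $K$-orbits of $s'(Ku)$ coincide, so each $s'(Ku)k'$ equals $k_0s'(Ku)=s'(Ku)\rho(k_0,Ku)$ for some $k_0$, whence $\rho(k_0,Ku)=k'$ by freeness of the right action. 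Consequently $\rho(K,Ku)=K$, every nonempty fiber of $M$ is all of $K$, and $M=N\times K$.

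Finally, transporting $\alpha|_{G\times(N\times K)}$ along an isomorphism $N\cong M(G/K)$ yields an action of $G$ on $M(G/K)\times K$ for which $M(G)\cong M\cong M(G/K)\times K$, as required. The main obstacle is the surjectivity of $k_0\mapsto\rho(k_0,Ku)$ in the fiber argument: this is the step that genuinely uses the SIN hypothesis, entering through the coincidence of left and right $K$-orbits, and without it one would only obtain that each fiber is a coset of a closed subgroup of $K$ rather than all of $K$.
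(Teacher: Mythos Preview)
Your proof is correct and follows the route the paper intends: the corollary is stated without proof, and the natural deduction is exactly to pull back a minimal subflow of $S(G)$ through the ambit isomorphism of Theorem~\ref{flows} and identify it as $N\times K$ with $N\cong M(G/K)$.

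One remark on efficiency: your fiber argument, while correct, is more elaborate than necessary. Since $\phi(Ku,k)=s'(Ku)k$ and the paper has already established (in the proof of Theorem~\ref{flows}) that left and right $K$-orbits in $S(G)$ coincide, the image $\phi(\{Ku\}\times K)=s'(Ku)K=Ks'(Ku)$ is precisely a left $K$-orbit. As $M(G)$ is $G$-invariant, it is in particular $K$-invariant and hence a union of $K$-orbits, so $\phi^{-1}(M(G))$ is automatically a union of full fibers $\{Ku\}\times K$; this yields $M=N\times K$ directly, without separately verifying that $k_0\mapsto\rho(k_0,Ku)$ is a bijection. Your explicit verification of this bijection is of course equivalent, and your identification of surjectivity as the place where SIN enters is exactly right. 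A minor point: where you invoke ``freeness of the right action'' to cancel $s'(Ku)$ on the left, what is literally available from the paper is the bijectivity of $(Ku,l)\mapsto s'(Ku)l$ established in the proof of Theorem~\ref{flows}; this gives the needed cancellation.
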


\section{Extensions by compact groups}\label{by}
In this section we will consider short exact sequences
\[
\{e\}\to N\to G\to K\to \{e\},
\]
where $K$ is compact. We verify that a result of Kechris and Soki\'c for Polish $G$ in \cite{KS} generalizes to arbitrary topological groups and works for the greatest ambit as well as the universal minimal flow. Our proof is slightly shorter, but the idea is the same. 

\begin{theorem}\label{bythm}
Let $G$ be a topological group with a closed normal subgroup $N$ such that $G/N$ is compact. Suppose that there is a continuous cross section $s:G/N\to G.$  Then $S(G)\cong S(N)\times G/N$.
\end{theorem}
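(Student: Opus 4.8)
The plan is to build an explicit pair of mutually inverse continuous maps between $S(G)$ and $S(N)\times G/N$, using the cross section $s$ to split $G$ as $N\times G/N$ and then extending the splitting to the Samuel compactifications. Write $K=G/N$ and $q\colon G\to K$ for the quotient homomorphism; after replacing $s$ by $k\mapsto s(N)^{-1}s(k)$ I may assume $s(N)=e$. First I would record the two structural maps that come for free. The homomorphism $q$ is right uniformly continuous, and $K$ with the left translation action $g\cdot(hN)=ghN$ is a compact $G$-ambit with base point $N$; by universality $q$ extends to a $G$-ambit homomorphism $\bar q\colon S(G)\to K$. The inclusion $\iota\colon N\to G$ is right uniformly continuous (since $(V\cap N)n\subseteq Vn$), so it extends to an $N$-ambit homomorphism $\bar\iota\colon S(N)\to S(G)$, and $\bar q\circ\bar\iota\equiv N$ because $q\iota\equiv N$ on the dense set $N$.

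The crucial new ingredient is a right uniformly continuous retraction $r\colon G\to N$. I would take $r(g)=s(q(g))^{-1}g$, which lands in $N$ (apply $q$) and restricts to the identity on $N$ (here $s(N)=e$ is used). Proving that $r$ is right uniformly continuous is the main obstacle, and it is exactly where compactness of $K=G/N$ enters. Writing $r(g)r(h)^{-1}=s(q(g))^{-1}(gh^{-1})s(q(h))=(cvc^{-1})(cd)$ with $c=s(q(g))^{-1}$, $d=s(q(h))$, $v=gh^{-1}$, I would control the two factors separately. The factor $cd=s(q(g))^{-1}s(q(h))$ is small whenever $gh^{-1}$ is small, because $q$ is right uniformly continuous and $s$, being continuous on the compact space $K$, is uniformly continuous for both the left and right uniformities on $G$. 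The factor $cvc^{-1}$ is small because $c$ ranges in the compact set $s(K)^{-1}$, and for compact $C$ and any $W\in\mathcal{N}_e(G)$ there is $V\in\mathcal{N}_e(G)$ with $CVC^{-1}\subseteq W$. Combining the two gives, for each $W$, a $V$ with $gh^{-1}\in V\Rightarrow r(g)r(h)^{-1}\in W$, so $r$ extends to a continuous $\bar r\colon S(G)\to S(N)$ with $\bar r\circ\bar\iota=\mathrm{id}_{S(N)}$.

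With these maps in hand the homeomorphism is formal. I would set $\Phi\colon S(N)\times K\to S(G)$, $\Phi(\eta,k)=s(k)\bar\iota(\eta)$, and $\Psi=(\bar r,\bar q)\colon S(G)\to S(N)\times K$; both are continuous, and a direct check on the dense sets $N\times K$ and $G$ shows $\Psi\circ\Phi=\mathrm{id}$ and $\Phi\circ\Psi=\mathrm{id}$ (using $q(s(k)n)=k$ and $r(s(k)n)=n$ for $n\in N$), whence $\Phi$ is a homeomorphism $S(N)\times G/N\cong S(G)$.

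Finally, to upgrade this to an isomorphism of ambits I would transport the $G$-action along $\Phi$. Factoring $gs(k)=s(q(g)k)\,\rho(g,k)$ with $\rho(g,k)=s(q(g)k)^{-1}gs(k)\in N$ and using that $\bar\iota$ is $N$-equivariant, one gets $g\cdot\Phi(\eta,k)=\Phi(\rho(g,k)\cdot\eta,\,q(g)k)$; thus $\Phi$ intertwines the $G$-action on $S(G)$ with the action $g\cdot(\eta,k)=(\rho(g,k)\cdot\eta,\,q(g)k)$ on $S(N)\times K$, whose continuity is immediate from continuity of $s,q$ and compactness of $K$, and whose cocycle identity for $\rho$ makes it an action, exactly as in Theorem \ref{flows}. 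Since $s(N)=e$ makes $\Phi$ send the base point $(e,N)$ to $e$, $\Phi$ is an isomorphism of ambits. In contrast with Theorem \ref{flows}, no SIN hypothesis is needed: the cocycle here takes values in the normal subgroup $N$ acting on $S(N)$ and is given by an explicit formula, so there is no orbit-matching obstruction to its being well defined.
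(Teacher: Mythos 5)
Your proof is correct, and its core coincides with the paper's: the same normalization $s(N)=e$, the same cocycle $\rho(g,Nh)=s(Ngh)^{-1}gs(Nh)$, the same extension of the inclusion $N\hookrightarrow G$ to a map $S(N)\to S(G)$ (the paper's $\mu$, your $\bar\iota$), and the same comparison map $\Phi(u,Nh)=s(Nh)\mu(u)$. Where you genuinely diverge is in proving that $\Phi$ is bijective. The paper gets this for free from universality: once $\Phi$ is checked to be a $G$-ambit homomorphism from the ambit $S(N)\times G/N$ \emph{into} the greatest ambit $S(G)$, it must be an isomorphism (composing with the ambit homomorphism $S(G)\to S(N)\times G/N$ supplied by universality gives base-point-preserving equivariant self-maps of each side, which are the identity on a dense orbit and hence everywhere). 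You instead manufacture an explicit inverse $(\bar r,\bar q)$, which requires the additional lemma that the retraction $r(g)=s(q(g))^{-1}g$ is right uniformly continuous; your argument for this --- splitting $r(g)r(h)^{-1}$ into a conjugate of $gh^{-1}$ by elements of the compact set $s(K)^{-1}$ and a term controlled by uniform continuity of $s$ on the compact group $G/N$ --- is correct, and it is the one place where compactness of $G/N$ does real work in your version. The trade-off: your route yields an explicit formula for the inverse homeomorphism and cleanly separates the topological statement from the dynamics (you obtain the homeomorphism before ever introducing the action), at the cost of a uniform-continuity verification that the universality argument renders unnecessary. Your concluding transport of the action along $\Phi$ via the cocycle, and your observation that no SIN hypothesis is needed because $\rho$ here is given by an explicit formula with values in $N$, match the paper's treatment exactly.
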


\begin{proof}
	
	By shifting, we can assume that $s(N)=e.$

We again define a cocycle $\rho: G\times G/N\to N$ by 
\[
s(Ngh)\rho(g, Nh)=gs(Nh).
\]

We have that $\rho(g, Nh)=s(Ngh)^{-1}gs(Nh),$ so $\rho$ is continuous. By (\ref{cocycle}) in the proof of Theorem \ref{flows}, $\rho(gg', Nh)=\rho(g, Ng'h)\rho(g',Nh).$ Therefore, the map $G\times (S(N)\times G/N)\to S(N)\times G/N$ defined by 
\[
g(u, Nh)=(\rho(g, Nh)u, Ngh)
\]
is an ambit action. 

Viewing $S(G)$ as an $N$-flow, there is an $N$-ambit homomorphism $\mu:(S(N),e)\to (S(G),e)$. %Since left translation $G\times G/N\to G/N$ is an ambit action, we get an ambit homomorphism $S(G)/to S(G/N)=G/N.$
We define $\phi:S(N)\times G/N \to S(G)$ by $(u, Nh)\mapsto s(Nh)\mu(u).$ Then $\phi$ is a $G$-ambit homomorphism:
\begin{enumerate}
	\item $\phi$ is continuous since, $s, \mu,$ and the action of $G$ on $S(G)$ are.
	\item $\phi(e,N)=s(N)\mu(e)=ee=e.$
	\item Since $\mu$ is a homomorphism of $N$-flows, we have $\mu(\rho(g,Nh)u)=\rho(g,Nh)\mu(u).$		
	\begin{align*}
		\phi(g(u,Nh))=&\phi(\rho(g,Nh)u, Ngh)=s(Ngh)\mu(\rho(g,Nh)u). \\
					 =& s(Ngh)\rho(g,Nh)\mu(u) \tag{$\mu$ homomorphism}\\
					 =&gs(Nh)\mu(u)= g\phi(u,Nh) \tag{$\rho$ cocycle} 	
		\end{align*}
		
	\end{enumerate}

Since $S(G)$ is the greatest ambit,  $\phi$ is an isomorphism. 
\end{proof}

\begin{cor}
Let $G$ be a topological group with a closed normal subgroup $N$ such that $G/N$ is compact. Suppose that there is a continuous cross section $s:G/N\to G.$  Then $M(G)\cong M(N)\times G/N$.	
\end{cor}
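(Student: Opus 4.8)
The plan is to use Theorem \ref{bythm} to transport the problem to the concrete flow $S(N)\times G/N$ and then locate its universal minimal subflow. Recall from the Background section that $M(G)$ is isomorphic to any minimal subflow of the greatest ambit $S(G)$, and likewise $M(N)$ is realized as a minimal $N$-subflow of $S(N)$. By Theorem \ref{bythm} I may identify $S(G)$ with $S(N)\times G/N$ carrying the $G$-action $g(u,Nh)=(\rho(g,Nh)u,Ngh)$, where (assuming as before $s(N)=e$) the cocycle $\rho$ takes values in $N$. It therefore suffices to show that $M(N)\times G/N$, with $M(N)$ a fixed minimal $N$-subflow of $S(N)$, is a minimal $G$-subflow of $S(N)\times G/N$; since $M(G)$ is isomorphic to any minimal subflow of $S(G)\cong S(N)\times G/N$, this yields $M(G)\cong M(N)\times G/N$.

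First I would check invariance: for $(u,Nh)$ with $u\in M(N)$ and any $g\in G$, the second coordinate $Ngh$ stays in $G/N$, while $\rho(g,Nh)\in N$ together with the $N$-invariance of $M(N)$ gives $\rho(g,Nh)u\in M(N)$. As $M(N)$ is closed in $S(N)$ and $G/N$ is compact, $M(N)\times G/N$ is a closed $G$-invariant set, i.e.\ a $G$-subflow.

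For minimality, let $Y\subseteq M(N)\times G/N$ be a nonempty closed $G$-invariant set. The second projection $\pi_2\colon M(N)\times G/N\to G/N$ is a continuous $G$-map onto the left-translation flow $G/N$, which is minimal because $G$ acts transitively on $G/N$ and $G/N$ is compact; hence $\pi_2(Y)=G/N$, and in particular the fiber $Y_N=\{u:(u,N)\in Y\}$ is nonempty and closed. For $g\in N$ one has $Ng=N$ and $\rho(g,N)=s(N)^{-1}gs(N)=g$, so on this fiber the $G$-action restricts to the original $N$-action $u\mapsto gu$; thus $Y_N$ is a nonempty closed $N$-invariant subset of $M(N)$, and minimality of $M(N)$ forces $Y_N=M(N)$. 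Finally, a direct cocycle computation (again using $s(N)=e$) gives $s(Nh)(u,N)=(u,Nh)$, so $G$-invariance of $Y$ spreads the full fiber $M(N)\times\{N\}\subseteq Y$ across every coset, giving $Y=M(N)\times G/N$.

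I expect the minimality step to be the only genuine obstacle. The delicate points are identifying the $G$-action on the fiber over the base coset with the honest $N$-action on $M(N)$—this is exactly where the normalization $s(N)=e$ is needed to trivialize the cocycle on that fiber—and verifying that the cross section $s$ transports this fiber onto every other fiber. Everything else (invariance, equivariance of $\pi_2$, and transitivity of $G$ on $G/N$) is routine, and the statement for the universal minimal flow then follows formally from the ambit isomorphism of Theorem \ref{bythm}.
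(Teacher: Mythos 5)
Your proposal is correct and follows essentially the same route as the paper: both arguments rest on the observation that the cocycle $\rho$, restricted to elements stabilizing a coset, sweeps out all of $N$ (in your normalization, $\rho(g,N)=g$ for $g\in N$), so that minimality of $M(N)$ under $N$ together with transitivity of $G$ on $G/N$ forces $M(N)\times G/N$ to be a minimal subflow of $S(N)\times G/N\cong S(G)$. The paper phrases this as density of every orbit via surjectivity of $g\mapsto\rho(g,Nh)$, while you show every closed invariant set is everything; these are equivalent formulations of the same argument, with yours spelled out in more detail.
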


\begin{proof}
	For every $Nh\in G/N,$ the evaluation map $G\to N,$ $g\mapsto\rho(g, Nh) $ given by the cocycle $\rho$ defined in the proof of Theorem \ref{bythm} is onto. Viewing $M(N)$ as a subflow of $S(N)$, for any $m\in M(N)$, $\{\rho(g,Nh)m:g\in G\}=Nm$, so $M(N)\times G/N$ is a minimal subflow of $S(N)\times G/N.$
\end{proof}
	
	\section{Applications}
	\subsection{Totally disconnected locally compact groups}
	\begin{comment}
	Let $G$ be a non-Archimedean group and let $\N$ be a basis at the identity consisting of open subgroups. For $H$ in $\N$ denote by $L_H(G)=\{HA:A\subset G\}$ (so $L_H(G)$ is isomorphic to $\mathcal{P}(G/H)$; recall that $G/H$ is a discrete space). Let $L(G)=\bigcup_{H\in\N}L_H(G).$ Then $S(G)$ is the Stone space of $L(G).$
	The universal minimal flow is homeomorphic to the Stone space of any subalgebra of $S(G)$ that 
	\begin{enumerate}
		\item consists of syndetic sets,
		\item is invariant under $G$,
		\item is maximal such with respect to inclusion.
	\end{enumerate}
	Let $B(G)$ be a fixed such subalgebra of $L(G)$ and let $B_H(G)=L_H(G)\cap B(G).$ Since $M(G)$ is a retract of $S(G)$ and that reflects to the levels (i.e. Stone of $L_H(G)$ onto Stone of $B_H(G)$), we get that each $B_H(G)$ is a complete Boolean algebra.
	
	Recall that for any countable discrete group the universal minimal flow has the same phase space, namely the Gleason space of $2^{\mathfrak{c}},$ i.e., the Stone space of the completion of the free Boolean algebra on $\mathfrak{c}$ free generators. 
	
	($\St(B)$ denotes Stone space of algebra $B.$)
	\end{comment}
	Totally disconnected locally compact (t.d.l.c.) groups coincide with locally compact groups of automorphisms of first order structures. T.d.l.c. groups  admit a local basis at the neutral element $e$ consisting of open compact subgroups. Systematic study of Polish t.d.l.c. groups was started by the book of Wesolek (\cite{W}). By van Dantzig's theorem, the underlying topological space of a Polish t.d.l.c. group is homeomorphic to either a countable set, the Cantor space $2^{\mathbb{N}},$  or to $\mathbb{N}\times 2^{\mathbb{N}}$. By Veech's theorem (\cite{V}), every locally compact group acts freely on its greatest ambit, and consequently on its universal minimal flow. If $K$ is a compact open normal subgroup of $G,$ then $G/K$ is a discrete group. Therefore, any cross section $G/K\to G$ is uniformly continuous and we can apply Corollary \ref{main} to derive the following.
	
	\begin{theorem}\label{tdlc}
		Let $G$ be a t.d.l.c. group with a normal compact open subgroup $K.$ Then the phase space of $M(G)$ is homeomorphic to $M(G/K)\times K.$
	\end{theorem}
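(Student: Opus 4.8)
The plan is to show that this theorem is an immediate application of Corollary \ref{main} (together with its underlying Corollary \ref{product}): it suffices to verify the two standing hypotheses, namely that a uniformly continuous cross section $G/K\to G$ exists and that $K$ acts freely on $S(G)$.

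For the first point I would exploit that $K$ is \emph{open}. This forces the quotient $G/K$ to be discrete, so its right uniformity is the discrete one: taking the open neighbourhood $\{eK\}$ of the identity coset in the generating family $\{V\cdot gK : gK\in G/K\}$ already produces the cover of $G/K$ by singletons. Every map out of a space carrying the discrete uniformity is trivially uniformly continuous, so a cross section $s:G/K\to G$ obtained by selecting one representative from each coset of $K$ is automatically uniformly continuous in the sense required by Corollary \ref{main}. For the second point I would appeal to Veech's theorem, by which the locally compact group $G$ acts freely on $S(G)$; restricting to the subgroup $K$ shows that $K$ acts freely as well. (When $G$ is compact one has $S(G)=G$ and freeness is just freeness of left translation, so this case is subsumed too.)

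With both hypotheses verified, Corollary \ref{main} applies directly and yields that the phase space of $M(G)$ is homeomorphic to $M(G/K)\times K$. I do not expect any genuine obstacle here: all of the substantive work — producing the continuous section $S(G)/K\to S(G)$ via Theorem \ref{section} and assembling the homeomorphism via Corollary \ref{product} — has already been carried out. The only point that really needs to be recorded is the elementary observation that openness of $K$ renders $G/K$ discrete, and it is precisely this discreteness that trivializes the uniform-continuity requirement on the cross section, which is otherwise the delicate condition in the general theory.
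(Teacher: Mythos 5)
Your proposal is correct and matches the paper's own argument exactly: the paper likewise observes that openness of $K$ makes $G/K$ discrete (so any cross section is uniformly continuous) and invokes Veech's theorem for freeness of the $K$-action on the greatest ambit, then applies Corollary \ref{main}. No gaps.
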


In the case of Polish t.d.l.c. group we get a complete characterization of phase spaces of universal minimal flows. 

	\begin{corollary}\label{tdlciso}
		If $G$ is a Polish t.d.l.c. group with a normal compact open subgroup $K,$ then $M(G)$ is homeomorphic to a finite set, $M(\mathbb{Z}), 2^{\mathbb{N}}$,  or $M(\mathbb{Z})\times 2^{\mathbb{N}}.$
	\end{corollary}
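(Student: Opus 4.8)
The plan is to take Theorem \ref{tdlc} as given, so that the phase space of $M(G)$ is already known to be homeomorphic to the product $M(G/K)\times K$, and then to determine the homeomorphism type of each factor from the Polishness hypothesis before simplifying the four resulting products.

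First I would record the structural consequences of the hypotheses on each factor. Since $K$ is open, the quotient $G/K$ is discrete, and since $G$ is Polish (hence separable) $G/K$ is countable; thus $G/K$ is a countable discrete group, either finite or countably infinite. On the other side, $K$ is a compact metrizable totally disconnected group, and being a topological group it is homogeneous, so it has no isolated points unless it is discrete and therefore finite. Brouwer's characterization of the Cantor set (compact, metrizable, perfect, totally disconnected) then forces $K$ to be either finite or homeomorphic to $2^{\mathbb{N}}$.

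Next I would identify the factor $M(G/K)$ using the two cases for $G/K$. If $G/K$ is finite then it is compact, so $M(G/K)\cong G/K$ is a finite set. If $G/K$ is countably infinite, then Theorem \ref{discrete} identifies $M(G/K)$, up to homeomorphism, with $M(\mathbb{Z})$, both being the Gleason cover of $2^{2^{\aleph_0}}$. Combining the two options for $M(G/K)$ with the two options for $K$ leaves exactly four products to simplify: finite $\times$ finite, finite $\times 2^{\mathbb{N}}$, $M(\mathbb{Z})\times$ finite, and $M(\mathbb{Z})\times 2^{\mathbb{N}}$.

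Finally I would carry out the absorption simplifications, and the hard part will be the case $M(\mathbb{Z})\times F$ for a finite set $F$. A finite product of finite sets is finite, and a finite disjoint union of Cantor sets is again a Cantor set by Brouwer's characterization, so finite $\times 2^{\mathbb{N}}\cong 2^{\mathbb{N}}$. For $M(\mathbb{Z})\times F$, I would observe that it is a disjoint union of $|F|$ copies of the extremally disconnected space $M(\mathbb{Z})$, hence itself extremally disconnected, and that it maps irreducibly onto the disjoint union of $|F|$ copies of $2^{2^{\aleph_0}}$; since splitting off a single coordinate gives $2^{\kappa}\sqcup 2^{\kappa}\cong 2^{\kappa}$ for infinite $\kappa$, and hence $2^{\kappa}\times F\cong 2^{\kappa}$, this base space is homeomorphic to $2^{2^{\aleph_0}}$, so uniqueness of the Gleason cover yields $M(\mathbb{Z})\times F\cong M(\mathbb{Z})$. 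The four cases then collapse precisely to a finite set, $2^{\mathbb{N}}$, $M(\mathbb{Z})$, and $M(\mathbb{Z})\times 2^{\mathbb{N}}$, as claimed.
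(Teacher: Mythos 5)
Your proof is correct and follows the route the paper intends (the paper states this corollary without proof as an immediate consequence of Theorem \ref{tdlc}): identify $K$ as finite or Cantor via Brouwer's characterization, identify $M(G/K)$ via Theorem \ref{discrete}, and collapse the four products. The absorption step $M(\mathbb{Z})\times F\cong M(\mathbb{Z})$ via extremal disconnectedness and uniqueness of the Gleason cover of $2^{2^{\aleph_0}}$ is exactly the right justification for the only non-obvious case.
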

	
	In case that $G$ in Theorem \ref{tdlc} is moreover SIN, we can also define the action as in Corollary \ref{iso}. 
	
	\begin{theorem}
		If $G$ is a SIN t.d.l.c. group and $K$  any normal compact open subgroup, then $M(G)$ is isomorphic  to $M(G/K)\times K.$
	\end{theorem}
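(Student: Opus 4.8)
The plan is to reduce the statement to Corollary \ref{iso} by verifying its three standing hypotheses, all of which are inherited from Theorem \ref{flows}: that $G$ is SIN, that $K$ acts freely on $S(G)$, and that the quotient map $G\to G/K$ admits a uniformly continuous cross section.

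The first hypothesis is given. For the second, I would invoke Veech's theorem: a t.d.l.c. group is in particular locally compact, so $G$ acts freely on its greatest ambit $S(G)$, and the restricted action of the subgroup $K$ is therefore free as well. It is worth emphasizing that what Theorem \ref{flows} requires is freeness of $K$ on the whole of $S(G)$, not merely on the minimal subflow $M(G)$; Veech's theorem delivers exactly this stronger conclusion. For the third, I would use that $K$ is compact \emph{and} open: the quotient $G/K$ is then a discrete group, so a cross section $s\colon G/K\to G$ is obtained by selecting one representative from each coset of $K$, and any set map out of a discrete space is automatically uniformly continuous with respect to the right uniformity on $G$. This is precisely the observation already used to derive Theorem \ref{tdlc} through Corollary \ref{main}.

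With all three hypotheses confirmed, Corollary \ref{iso} applies verbatim and yields an action of $G$ on $M(G/K)\times K$ — namely the one built from the cocycle $\rho$ of Theorem \ref{flows} — together with an isomorphism of flows $M(G)\cong M(G/K)\times K$, as desired.

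I do not anticipate a genuine obstacle here: the additional content of this theorem over Theorem \ref{tdlc} is carried entirely by the SIN hypothesis, which is exactly what upgrades the bare homeomorphism of phase spaces to an isomorphism of $G$-flows via the machinery of Theorem \ref{flows}. The only point deserving care is the distinction noted above between freeness on $S(G)$ and freeness on $M(G)$, which is why citing Veech's theorem (rather than the weaker statement that $K$ acts freely on $M(G)$) is the essential ingredient.
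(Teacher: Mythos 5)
Your proposal is correct and follows exactly the route the paper intends: Veech's theorem gives freeness of $K$ on $S(G)$, discreteness of $G/K$ gives the uniformly continuous cross section, and Corollary \ref{iso} (via Theorem \ref{flows}) then yields the isomorphism of flows. Your remark that freeness is needed on all of $S(G)$ rather than just on $M(G)$ is a correct and worthwhile point of care, and it is precisely what the paper's appeal to Veech's theorem supplies.
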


Polish SIN t.d.l.c. groups are exactly locally compact groups of automorphisms of countable first order structures admitting a two sided invariant metric (TSI), equivalently, Polish groups admitting a countable basis at the neutral element consisting of compact open normal subgroups.

	\subsection{Semi-direct products}
	
	In \cite{KS}, Kechris and Soki\'c studied universal minimal flows of semidirect products of Polish groups with one of the factors compact. Their proof can be modified to apply to groups that are not Polish as well. We include the results for completeness and extend them to the greatest ambit. %and phrase our proof using cross sections.
	
	In Theorem \ref{flows} we required $G$ to be SIN and in Theorem \ref{bythm}, $S(G/N)=G/N,$ which allowed us to define the cocycle $\rho$ to compensate for $s$ not being necessarily a group homomorphism. If $s$ is a group homomorphism, that is, the respective short exact sequence splits, then $G$ is a semidirect product, and we do not need $\rho$.
	
	\begin{theorem}
		Let $G\cong H\ltimes K,$ where $K$ is compact. Then $S(G)\cong S(H)\times K$ and $M(G)\cong M(H)\times K.$
		\end{theorem}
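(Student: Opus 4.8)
The plan is to recognize that the semidirect product $G\cong H\ltimes K$ is a special case of Theorem \ref{flows} and Corollary \ref{iso}, where the splitting of the short exact sequence
\[
\{e\}\to K\to G\to H\to\{e\}
\]
provides a cross section $s:H\to G$ that is itself a group homomorphism. First I would identify $H$ with $G/K$ via the natural isomorphism, so that the inclusion $H\hookrightarrow G$ coming from the semidirect product structure is precisely a cross section of the quotient map $G\to G/K$. Since $s$ is a continuous group homomorphism, it is automatically right uniformly continuous: continuity at $e$ together with the homomorphism property transports any basic entourage, so the hypothesis of Theorem \ref{section} is satisfied, and we obtain a continuous cross section $s':S(G)/K\to S(G)$.

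Next I would invoke Veech's theorem, cited in the Background section, which guarantees that $K$ (being a nontrivial compact subgroup of the locally compact $G$, or more directly as a subgroup of $G$ acting on the greatest ambit) acts freely on $S(G)$; this is the standing hypothesis needed for Corollary \ref{product}. Applying Corollary \ref{product} then yields immediately that the phase space of $S(G)$ is homeomorphic to $S(G/K)\times K\cong S(H)\times K$, and Corollary \ref{main} gives the corresponding statement $M(G)$ homeomorphic to $M(H)\times K$ at the level of phase spaces.

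The substance of the theorem, however, lies in recovering the action, not merely the homeomorphism type. Here the point is that when $s$ is a group homomorphism the cocycle $\rho$ defined in the proof of Theorem \ref{flows} by $s'(Kgu)\rho(g,Ku)=gs'(Ku)$ simplifies drastically, so that the SIN hypothesis is no longer needed. I would argue that because $s$ is multiplicative on $G$ and extends canonically to $s'$, the correction term $\rho(g,Ku)$ reduces to a conjugation action of $H$ on $K$ independent of the intricate commutation between left and right $K$-orbits that forced the SIN assumption in Theorem \ref{flows}. Concretely, writing $g=(h,k)$ in $H\ltimes K$, one computes that the action $\alpha:G\times(S(H)\times K)\to S(H)\times K$ becomes the diagonal-type action built from the $G$-action on $S(H)$ descended through the quotient and the given automorphic action of $H$ on $K$, and then verifies the ambit homomorphism properties exactly as in steps (1)--(3) of the proof of Theorem \ref{flows}.

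The main obstacle I anticipate is precisely this last verification: showing that the cocycle $\rho$ is genuinely a coboundary (or vanishes up to the built-in conjugation) when $s$ is a homomorphism, so that the isomorphism $\phi:(Ku,k)\mapsto s'(Ku)k$ is a $G$-ambit map without appealing to the coincidence of left and right uniformities. This requires checking that $s'$ remains multiplicative in the relevant sense on the compactification, i.e. that $s'(Kgu)=g\,s'(Ku)\,k^{-1}$ holds with $k$ given by the semidirect structure rather than by an abstract freeness argument; once that identity is in hand, the cocycle identity (\ref{cocycle}) and universality of $S(G)$ finish the proof, and restriction to minimal subflows delivers $M(G)\cong M(H)\times K$.
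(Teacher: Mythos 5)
Your proposal diverges from the paper's argument and has two genuine gaps. First, to apply Corollary \ref{product} you need $K$ to act freely on $S(G)$, and you justify this by Veech's theorem; but Veech's theorem concerns locally compact groups, and here $H$ (hence $G=H\ltimes K$) is an arbitrary topological group, so the appeal fails. Freeness of the $K$-action is in fact true for semidirect products with $K$ compact, but it requires an argument (e.g., extending the right uniformly continuous $K$-equivariant map $g\mapsto gs(\pi(g))^{-1}$ from $G$ to a $K$-map $S(G)\to K$), and without it even the phase-space homeomorphism is not established by your route. Second, and more seriously, the part of the theorem that recovers the action is exactly the part you defer: you propose to reuse the cocycle $\rho$ of Theorem \ref{flows}, but that cocycle is only well defined because the SIN hypothesis makes left and right $K$-orbits in $S(G)$ coincide, so that $gs'(Ku)$ can be written as $s'(Kgu)$ times an element of $K$ \emph{on the right}; dropping SIN, the identity $s'(Kgu)=gs'(Ku)k^{-1}$ that you yourself flag as the ``main obstacle'' is precisely what needs proof, and your proposal does not supply it.

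The paper avoids both issues by a more direct construction that never uses freeness, SIN, or the cross section $s':S(G)/K\to S(G)$ of Theorem \ref{section}. Since $s:H\to G$ is a continuous homomorphism, $(h,u)\mapsto s(h)u$ is a continuous $H$-ambit action on $S(G)$, so the universal property of $S(H)$ yields an $H$-ambit homomorphism $s':S(H)\to S(G)$ satisfying $s'(hu)=s(h)s'(u)$ --- this is the multiplicativity you were missing, obtained for free from universality rather than from an analysis of $K$-orbits in the compactification. One then defines the action on $S(H)\times K$ explicitly by $g(u,k)=(\pi(g)u,\,gks(\pi(g))^{-1})$, a cocycle built from $\pi$ and $s$ on $G$ itself with no compactification subtleties, and checks that $\phi(u,k)=ks'(u)$, with $k$ multiplied on the \emph{left}, is a $G$-ambit homomorphism; universality of $S(G)$ then makes $\phi$ an isomorphism, and restricting to the minimal subflow $M(H)\times K$ of $S(H)\times K$ gives $M(G)\cong M(H)\times K$. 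If you want to salvage your route, you must both prove freeness directly and establish the multiplicativity of your $s'$ without SIN; the paper's construction shows it is simpler to bypass the machinery of Theorem \ref{flows} altogether.
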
 
	
	\begin{proof}
		Let $s:H\to G$ be a cross section that is a continuous homomorphism and let $\pi:G\to H$ be the natural projection. Define $\alpha: G\times (S(H)\times K)\to (S(H)\times K)$ by $g(u,k)=(\pi(g)u,gks(\pi(g))^{-1}).$ Since $\pi$ and $s$ are continuous group homomorphisms, $\alpha$ is a continuous action. Also, $G(e,e)=(He, K)$ is dense in $S(H)\times K,$ so $\alpha$ is an ambit action. %Since $s$ is a continuous group homomorphism, it is uniformly continuous. By Theorem \ref{section} and Lemma \ref{iso}, $s$ extends to a continuous  $s':S(H)\to S(G)$ 
		Since $s$ is a continuous homomorphism, we can define a continuous ambit action  $H\times S(G)\to S(G)$ by $(h,u)\mapsto s(h)u.$ Therefore, there is an $H$-ambit homomorphism $s':S(H)\to S(G)$ extending $s$.
		Define $\phi:S(H)\times K\to S(G)$ by $(u,k)\mapsto ks'(u)$. Then $\phi$ is a $G$-ambit homomorphism:
		\begin{align*}
		\phi(g(u,k))=&\phi((\pi(g)u, gk(s(\pi(g)))^{-1}))=gk(s(\pi(g)))^{-1}s'(\pi(g)u)\\
		=&
		gk(s(\pi(g)))^{-1}s(\pi(g))s'(u)=gks'(u)=g\phi(u,k). \tag{$s'$  homomorphism}
		\end{align*}
		As $S(G)$ is the greatest $G$-ambit, we can conclude that $\phi$ is an isomorphism.
		
		Because $M(H)$ is a minimal subflow of $S(H),$ we have that $M(H)\times K$ is a minimal subflow of $S(H)\times K$, and therefore isomorphic to $M(G).$
	\end{proof}
	
	The following result is an immediate application of Theorem \ref{bythm}.
	
	\begin{theorem}
		Let $G\cong K\ltimes H,$ where $K$ is compact. Then $S(G)\cong S(H)\times K$ and $M(G)\cong M(H)\times K.$ 
		\end{theorem}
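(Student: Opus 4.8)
The plan is to recognize this statement as a direct instance of Theorem \ref{bythm} and its corollary. Writing $G \cong K \ltimes H$ with the convention (consistent with the preceding theorem) that the right-hand factor $H$ is the normal subgroup on which $K$ acts, I set $N = H$ in the notation of Section \ref{by}. Then $N = H$ is a closed normal subgroup of $G$ and the quotient $G/N = G/H$ is isomorphic to $K$, hence compact. This realizes the short exact sequence $\{e\} \to H \to G \to K \to \{e\}$ required there, and it remains only to produce a continuous cross section $s : G/H \to G$.

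First I would exhibit the cross section. Because $G$ is a semidirect product, this sequence splits: the inclusion of the $K$-factor is a continuous group homomorphism $K \to G$, and composing with the isomorphism $K \cong G/H$ gives a continuous (indeed homomorphic) cross section $s : G/H \to G$ with $\pi \circ s = \text{id}_{G/H}$, where $\pi : G \to G/H$ is the quotient map. In particular $s(H) = e$, so the normalization $s(N) = e$ used in the proof of Theorem \ref{bythm} holds automatically.

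With $N = H$ and this cross section in hand, Theorem \ref{bythm} applies verbatim and yields $S(G) \cong S(H) \times G/H \cong S(H) \times K$ as $G$-ambits. The corollary to Theorem \ref{bythm} then gives $M(G) \cong M(H) \times G/H \cong M(H) \times K$: indeed $M(H) \times K$ embeds as a minimal subflow of $S(H) \times K$ precisely because the evaluation maps $g \mapsto \rho(g, Nh)$ of the associated cocycle are surjective, as in that corollary.

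Since the appeal to Theorem \ref{bythm} is mechanical, there is no real obstacle here. The only point demanding care is the bookkeeping of the semidirect-product convention — confirming that $H$, and not $K$, is the normal factor, so that $G/H \cong K$ is the compact quotient — together with checking that the splitting homomorphism descends to a genuine cross section of $\pi$ under the identification $G/H \cong K$. Once these identifications are pinned down, both isomorphisms follow immediately from the results of Section \ref{by}.
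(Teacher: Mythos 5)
Your proposal is correct and matches the paper exactly: the paper likewise dispatches this theorem as an immediate application of Theorem \ref{bythm} (with $N=H$ normal, $G/H\cong K$ compact, and the splitting homomorphism supplying the continuous cross section), and your write-up simply makes those identifications explicit.
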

	
	\section{Concluding remarks and questions}
	A natural question is whether we can prove a version of Theorem \ref{flows} with relaxed requirements. 
		%\begin{question}\label{question}
		%Does Corollary \ref{tdlc} hold also in case $K$ is not normal?
	%\end{question}
	If $K$ is not normal, but there is a uniformly continuous cross section $s:G/K\to G,$ we can prove that the Samuel compactification $S(G/K)$ of the quotient space $G/K$ is homeomorphic to the orbit space $S(G)/K$, and consequently $S(G)$ is homeomorphic to $S(G/K)\times K.$ However,  $G/K$ is not a group, so there is no notion of a $G/K$-flow on $S(G/K)$.
	A natural test problem is whether Corollary \ref{tdlc} holds for all Polish t.d.l.c. groups. Among the easiest examples of Polish t.d.l.c. groups that are not SIN are groups of automorphisms of graphs of finite degree. Answering the following concrete question will likely shed light on this problem. 

	\begin{question}
		What is the universal minimal flow of the group of automorphism of a countable regular $n$-branching tree for $n\geq 3$?
	\end{question}

 Grosser and Moskowitz showed  in \cite{GrM} that a locally compact SIN group $G$  contains a compact normal subgroup $K$ such that $G/K$ is a Lie SIN group. To reduce computation of $M(G)$ to computation of $M(G/K)$ using methods in this paper, we would need to have a uniformly continuous cross section $G/K\to G.$ However, there may not exist continuous cross sections even in compact groups.
 Using this decomposition, they showed 
   that connected locally compact SIN groups are extensions of compact groups by vector groups $\mathbb{R}^n$. More generally, they proved that every locally compact SIN group $G$ is an extension of $\mathbb{R}^n\times K$ with $K$ compact by a discrete group. It is natural to ask whether it is the case that  $M(G)$ can be computed from $M(H),$ where $H$ is an extension of $\mathbb{R}^n$ by a discrete group. Let us remark, that $M(\mathbb{R})$ was computed by Turek in \cite{T3} as a quotient of $M(\mathbb{Z})\times [0,1],$ and his method has recently been generalized to $M(\mathbb{R}^n)$ by Vishnubhotla (see \cite{Vi}). % Our methods cannot be readily applied, since generally not every quotient $G\to G/K$ admits a continuous cross section, and even more rarely uniformly continuous ones. 

\end{document}